\documentclass{article}
\title{Obstructing Reducible Surgeries: Slice Genus and Thickness Bounds}
\author{Holt Bodish and Robert DeYeso III}
\date{}
\usepackage{amsmath,amsthm, amssymb,amsfonts}
\usepackage[alphabetic]{amsrefs}
\usepackage{mathtools}
\usepackage{mathrsfs}
\usepackage[margin=1.5in]{geometry}
\theoremstyle{plain}
\newtheorem{theorem}{Theorem}[section]
\newtheorem{lemma}[theorem]{Lemma}
\newtheorem{proposition}[theorem]{Proposition}
\newtheorem{definition}[theorem]{Definition}
\newtheorem{corollary}[theorem]{Corollary}
\newtheorem{conjecture}[theorem]{Conjecture}

\newcommand{\Z}{\mathbb{Z}}

\newcommand{\F}{\mathbb{F}}

\newcommand{\HFhat}{\widehat{HF}}
\newcommand{\rank}{\text{rank}}
\newcommand{\Spinc}{\mathrm{Spin^c}}
\newcommand{\Cone}{\mathrm{Cone}}

\begin{document}
\maketitle
\begin{abstract}
    In this paper, we study reducible surgeries on knots in $S^3$. We develop thickness bounds for L-space knots that admit reducible surgeries, and lower bounds on the slice genus for general knots that admit reducible surgeries. The L-space knot thickness bounds allow us to finish off the verification of the Cabling Conjecture for thin knots, which was mostly worked out in \cite{DeY21b}. We also provide a new upper bound on reducing slopes for fibered, hyperbolic slice knots and on multiple reducing slopes for slice knots. Our techniques involve the $d$-invariants and mapping cone formula from Heegaard Floer homology.
\end{abstract}

\section{Introduction}


Suppose $K$ is a knot in $S^3$. We denote by $S^3_{p/q}(K)$ the result of $\frac{p}{q}$-Dehn surgery on $K$. 
We are interested in Dehn surgeries that produce essential $2$-spheres. A $2$-sphere in a $3$-manifold is essential if it is not the boundary of an embedded $3$-ball $B^3$, and we say that $M$ is reducible if it contains an essential $2$-sphere. The solution of the Property R conjecture in \cite{Gabai87} shows we can assume that $\frac{p}{q}\neq 0$ and the surgery decomposes as a connected sum. All known examples of reducible surgeries on knots in $S^3$ are given by $pq$-surgery on the $(p,q)$-cable of a knot $K$. Letting $C_{p,q}(K)$ denote the $(p,q)$-cable of $K$, we have \begin{equation}\label{red}S^3_{pq}(C_{p,q}(K))\cong L(p,q)\#S^3_{q/p}(K).\end{equation} The Cabling Conjecture of Gonz\'{a}les-Acu\~{n}a and Short asserts that these are the only examples.


\begin{conjecture}[\label{CC}\cite{Acuna}] If $K$ is a knot in $S^3$ which has a reducible surgery, then $K$ is a cabled knot and the reducing slope is given by the cabling annulus.

\end{conjecture}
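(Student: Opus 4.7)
The plan is to pursue a program rather than a single-shot proof, since the Cabling Conjecture is an open problem and the paper itself obtains only partial progress. My strategy is to combine classical 3-manifold restrictions on reducible surgeries with Heegaard Floer obstructions that force the knot Floer data of $K$ to match that of a cable, then attempt to promote this Floer-theoretic match to a geometric identification.

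First, I would reduce the slope. By work of Gordon--Luecke and Gordon--Litherland, a reducible surgery slope on a nontrivial knot in $S^3$ must be integral and positive (after mirroring), say $p$, and the cabling formula \eqref{red} tells us what the target should look like: if $K = C_{p,q}(K')$, then $S^3_p(K) \cong L(p,q) \# S^3_{q/p}(K')$. So the goal becomes: given $S^3_p(K) \cong L(p,q') \# Y$ with $Y$ an integer homology sphere, produce a knot $K'$ and cabling exponent $q$ such that $K = C_{p,q}(K')$ and $Y = S^3_{q/p}(K')$.

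Second, I would exploit the fact that $d$-invariants are additive under connected sum. Writing $d(S^3_p(K), \mathfrak{s}_i) = d(L(p,q'), \mathfrak{s}_i') + d(Y)$ across the $p$ spin$^c$ structures and comparing with the mapping cone computation $d(S^3_p(K), \mathfrak{s}_i) = -2\max(V_i, H_{p-i}) + \text{(lens correction)}$ of Ozsv\'ath--Szab\'o, one obtains a rigid system of equalities among the $V_i$ sequence of $K$. The shape of the lens summand pins down $q'$ modulo $p$, which one expects to be forced to agree with the cabling exponent; in favorable settings (L-space knots, thin knots, slice knots) the paper uses this rigidity to extract genus bounds and thickness bounds, and the same mechanism should drive the general case.

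Third, and this is where the real difficulty lies, one must promote the numerical conclusion ``$K$ has the $V_i$-sequence and Alexander polynomial of a $(p,q)$-cable'' to the geometric conclusion ``$K$ is a $(p,q)$-cable.'' The main obstacle is that Heegaard Floer homology sees only the filtered chain-homotopy type of $CFK^\infty(K)$, and a priori there could be non-cable knots whose Floer data mimics a cable. Bridging this gap in full generality likely requires additional input from sutured manifold theory, the JSJ decomposition of the surgery exterior, and the structure of reducing spheres in $S^3 \setminus K$ (for instance, using that a reducing sphere in $S^3_p(K)$ caps off an essential planar surface in the knot complement with boundary slope $p$). I would therefore treat Step~3 as the decisive unresolved step and, in the spirit of the present paper, focus effort on classes of knots (thin, L-space, slice with small genus, fibered) where the Floer-theoretic rigidity of Step~2 is already strong enough to force the cabling structure directly.
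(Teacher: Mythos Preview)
The statement you are asked to prove is Conjecture~\ref{CC}, the Cabling Conjecture. This is an open problem; the paper does not prove it and contains no proof to compare against. The paper's contribution is partial progress (thin knots, thickness bounds for L-space knots, slice-genus constraints), not a resolution of the conjecture. You recognize this yourself in your first sentence, so what you have written is not a proof proposal but an outline of a possible program.

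That said, a few points on the program itself. Your Step~1 contains a slope error: the cabling formula~\eqref{red} gives a reducible \emph{$pq$}-surgery on $C_{p,q}(K')$, not a $p$-surgery, so the reducing slope is $r=pq$ and the summands have first homology of orders $p$ and $q$ respectively. In particular, your working assumption that the reducible surgery has the shape $L(p,q')\# Y$ with $Y$ an integer homology sphere covers only the case $q=1$, which for cables means $K'=U$ and $K$ is a torus knot; the general case has both summands carrying nontrivial homology, and this is exactly the situation the paper's Theorem~\ref{thm:sumdinvariants} is designed to handle. Your Step~2 is in the spirit of what the paper actually does: comparing $d$-invariants across spin$^c$ structures via additivity and the Ni--Wu formula to constrain the $V_i$-sequence. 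Your Step~3 is, as you say, the decisive gap. Heegaard Floer data does not currently determine whether a knot is cabled, and no known argument promotes ``$CFK^\infty(K)$ looks like a cable'' to ``$K$ is a cable'' for general hyperbolic knots. Until that gap is closed, the program remains a heuristic, not a proof.
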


The cabling conjecure is known to be true for many families of knots: satellite knots \cite{Schar}, alternating knots \cite{Menasco}, torus knots \cite{Moser}, and for knots with symmetries and low bridge number (for a survey of known results and techniques see \cite{boyer}.) In particular, it remains to study reducible surgeries on hyperbolic knots.



Much is known about reducible surgeries on general knots. Observe that in the case of cabled knots, the reducing slope is integral and one of the connected summands is a lens space. Gordon and Luecke \cite{Gor87} show that this is the case for any reducing slope. In particular, any reducing slope $r$ satisfies $1<|r|$ since the lens space summand has to be non-trivial. The integrality of the reducing slope, combined with the theorem of Gordon \cite{Gor96} that the geometric intersection number of any two reducing slopes is $1$, shows that if $K$ has two reducing slopes, then they are consecutive integers. Further work in \cite{Sayari} shows that for hyperbolic knots, the reducing slope $r$ satisfies the restrictive bound \begin{equation}\label{Sayaribound}1<|r|\leq2g(K)-1.\end{equation}
It is also known that in a reducible surgery, no more than one of the summands is an integer homology sphere, and at most two of the summands are lens spaces \cites{Howie, Sanchez}. It is conjectured that three summands never arise from Dehn surgery on a knot in $S^3$ (for more on this, see Corollary 1.6). 


Another bound on the reducing slopes, this time for fibered knots with reducible surgeries with integer homology sphere summands, is proved in \cite{Acuna}*{Proposition 1.4}. We state it here for convenience, rephrased from the original source and using the Poincar\'{e} Conjecture. 

\begin{theorem}[\cite{Acuna}*{Proposition 1.4} \label{fiberedHprime}]

Suppose $K$ is a fibered knot in $S^3$ of genus $g$. If $S^3_r(K) \cong L(r,a) \# Y$ for $Y$ a homology sphere, then $r \leq g$.

\end{theorem}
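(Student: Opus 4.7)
The plan is to pass to the $r$-fold cyclic cover $\widetilde{M} \to S^3_r(K)$ corresponding to the abelianization $\pi_1(S^3_r(K)) \to H_1(S^3_r(K)) = \Z/r$, and to compare the rank of $\pi_1(\widetilde{M})$ computed from both sides of the homeomorphism $S^3_r(K) \cong L(r,a) \# Y$. I would first identify $\widetilde{M}$ explicitly: because $Y$ is a homology sphere, the abelianization annihilates $\pi_1(Y)$, so the cover restricts to the universal cover of $L(r,a)\setminus B^3$ (namely $S^3$ with $r$ balls removed) on one side, and to the trivial $r$-sheeted cover of $Y\setminus B^3$ (that is, $r$ disjoint copies) on the other. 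Regluing along the $r$ pairs of boundary spheres gives $\widetilde{M}\cong \#_r Y$.

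Next I would compute $\rank(\pi_1(\widetilde{M}))$ two ways. From the connected-sum side, Van Kampen gives $\pi_1(\widetilde{M})\cong \pi_1(Y)*\cdots*\pi_1(Y)$ ($r$ copies). The reducibility implicit in the decomposition $L(r,a)\#Y$ forces $Y\neq S^3$, so by the Poincar\'e conjecture $\pi_1(Y)$ is nontrivial; since $H_1(Y)=0$, $\pi_1(Y)$ is perfect, and a nontrivial perfect group cannot be cyclic (else its abelianization---itself---would vanish). Hence $\rank(\pi_1(Y))\geq 2$, and Grushko's theorem yields $\rank(\pi_1(\widetilde{M}))=r\cdot\rank(\pi_1(Y))\geq 2r$. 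From the fibered side, $\widetilde{M}$ is a Dehn filling of the $r$-fold cyclic cover of $E(K)$; since $K$ fibers with monodromy $\phi$ on a once-punctured genus-$g$ surface $F$, this cover is the mapping torus $M_{\phi^r}$ of $\phi^r$. The surgery solid torus lifts to a single connected solid torus glued along one slope, so $\pi_1(\widetilde{M})$ is a one-relator quotient of $\pi_1(M_{\phi^r})=\pi_1(F)\rtimes_{\phi^r_*}\Z$; as $\pi_1(F)$ is free of rank $2g$, this gives $\rank(\pi_1(\widetilde{M}))\leq 2g+1$. Combining, $2r\leq 2g+1$, so $r\leq g$ since $r$ is a positive integer.

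The main obstacle is establishing the lower bound $\rank(\pi_1(Y))\geq 2$: it requires both an appeal to the Poincar\'e conjecture (to ensure $\pi_1(Y)\neq 1$ whenever $Y\neq S^3$) and the group-theoretic observation that a nontrivial perfect group is not cyclic. The remaining steps---identifying the cover as $\#_r Y$ and computing the rank on the fibered side---are routine covering-space and Van Kampen calculations that nonetheless demand careful bookkeeping with conventions.
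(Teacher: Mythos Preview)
Your argument is correct, and it is in fact precisely the argument of Gonz\'{a}les-Acu\~{n}a and Short. Note, however, that the present paper does not supply its own proof of this statement: it is quoted as \cite{Acuna}*{Proposition 1.4} and merely restated ``for convenience, rephrased from the original source and using the Poincar\'{e} Conjecture.'' So there is nothing to compare against here beyond the original source, which your write-up faithfully reconstructs: pass to the $\Z/r$-cover, identify it as $\#_r Y$ on the reducible side, bound $\rank(\pi_1)$ from below by $2r$ via Grushko together with the observation that a nontrivial perfect group has rank at least $2$ (this is where Poincar\'e enters, to guarantee $\pi_1(Y)\neq 1$), and bound it from above by $2g+1$ on the fibered side since the cover is a single Dehn filling of $M_{\phi^r}$. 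The only comment worth making is that your phrase ``the reducibility implicit in the decomposition'' is doing real work: the statement as written is false if one allows $Y=S^3$ (take $K$ the unknot), so you are right to read the hypothesis as a genuine connected-sum decomposition.
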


More recently, progress has been made on the Cabling Conjecture using tools from Heegaard Floer homology. In \cite{HLZ15} they show that L-space knots only have one possible reducing slope $r=2g(K)-1$. In \cite{DeY21b} it is shown that hyperbolic thin knots do not admit reducible surgeries, except for the case of hyperbolic thin L-space knots (for more on this, see Theorem \ref{thm:thinCC}). In \cite{Greene} it is shown that the Cabling Conjecture is true for knots that have surgeries to connected sums of lens spaces. In \cite{threesummands} it is shown that reducible surgeries on slice knots (or more generally knots with $V_i(K)=0$ for all $i\geq 0$) only have two summands. 

In the following, we use Heegaard Floer homology to show that L-space knots that admit reducible surgeries have thickness greater than or equal to $2$, useful for finishing the verification of the Cabling Conjecture for thin knots. We also show that slice knots only admit reducible surgeries of a particular type, and more generally we can bound the slice genus of a knot in terms of the reducing slope parameters. The form of the reducible surgery on a slice knot allows us to restrict the possible slopes on fibered, hyperbolic slice knots. Our techniques mostly involve studying differences of the $d$-invariants of surgery, inspired by the proof of \cite{DeY21b}*{Lemma 5.5}.

\begin{theorem}{\label{thm:redslice}}
Suppose $K \subset S^3$ is a hyperbolic slice knot and $p,q$ are relatively prime integers. If $pq$ is a reducing slope for $K$ and $S^3_{pq}(K) \cong L(p,a) \# Y$ with $|H^2(Y;\Z)|=q$, then $q=1$, $a=1$, and $d(Y)=0$.
\end{theorem}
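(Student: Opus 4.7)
The plan is to use the sliceness of $K$ to pin down the $d$-invariants of $S^3_{pq}(K)$ and then exploit the connected-sum decomposition. Since $K$ is slice, $V_i(K) = 0$ for all $i \ge 0$ (used, for instance, in \cite{threesummands}), so by the Ozsv\'ath--Szab\'o integer surgery formula for $d$-invariants,
\[
d(S^3_{pq}(K), s) = d(S^3_{pq}(U), s) = d(L(pq, 1), s)
\]
for every $\Spinc$ structure $s$ on $S^3_{pq}(K)$, under the canonical affine identification. Combining this with $S^3_{pq}(K) \cong L(p, a) \# Y$ and additivity of $d$-invariants under connected sum on $\Spinc(L(p,a)\#Y) \cong \Spinc(L(p,a)) \times \Spinc(Y)$ yields the master equation
\[
d(L(pq, 1), s) = d(L(p, a), s_1) + d(Y, s_2),
\]
which must hold for every matched triple of $\Spinc$ structures.

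To extract $q = 1$, I would exploit the block structure induced by the CRT isomorphism $\Z/pq \cong \Z/p \oplus \Z/q$ (valid since $\gcd(p,q) = 1$). Fixing $s_2 \in \Spinc(Y) \cong \Z/q$ and letting $s_1$ range over $\Spinc(L(p,a)) \cong \Z/p$, the master equation asserts that the $p$-element multiset $\{d(L(pq, 1), s) : s \equiv s_2 \pmod{q}\}$ is a constant shift of $\{d(L(p, a), s_1) : s_1 \in \Z/p\}$; in particular, all $q$ such blocks must have the same spread $d_{\max} - d_{\min}$. Computing directly from the quadratic profile
\[
d(L(pq, 1), i) = \frac{(pq - 2i)^2 - pq}{4pq}, \qquad i \in \{0, 1, \ldots, pq - 1\},
\]
the spread of the block $\{i : i \equiv m \pmod{q}\}$ is a non-constant function of $m$: the $m = 0$ block contains the global maximum $(pq-1)/4$ attained at $i = 0$ and yields spread $\tfrac{pq}{4}$ (for $p$ even; similar for $p$ odd), while the $m = 1$ block has spread $\tfrac{pq - 4}{4}$, strictly smaller. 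Hence for $q \ge 2$ the block spreads cannot all agree, contradicting the master equation, so $q = 1$.

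With $q = 1$, $Y$ is an integer homology sphere and the master equation reads $d(L(p, 1), s) - d(L(p, a), s) = d(Y)$ for all $s$, so the profile of $d(L(p, a), \cdot)$ is a uniform shift of the quadratic profile of $d(L(p, 1), \cdot)$. A direct inspection of the $d$-invariants of lens spaces (via their recursion, or by applying the same spread rigidity) shows this shift property characterizes $a \equiv 1 \pmod p$, forcing $a = 1$, and then evaluating at any $s$ yields $d(Y) = 0$. The main obstacle I anticipate is the bookkeeping of the various $\Spinc$ identifications --- from Ozsv\'ath--Szab\'o's canonical parametrization of $\Spinc(S^3_{pq}(K))$, through the product decomposition on $L(p,a) \# Y$, to the CRT identification $\Z/pq \cong \Z/p \oplus \Z/q$ --- so that the block-structure argument really does match the connected-sum decomposition on the nose. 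Once these identifications are pinned down, the rigidity of the quadratic profile of $L(pq, 1)$ does all the work.
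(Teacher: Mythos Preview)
Your plan shares the paper's starting point --- sliceness gives $V_i(K)=0$, hence $d(S^3_{pq}(K),i)=d(L(pq,1),i)$, and additivity under connected sum does the rest --- but the execution diverges in both main steps.

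For $q=1$, the paper does not analyze block spreads. Instead it proves a general identity (Theorem~\ref{thm:sumdinvariants}): for any knot with such a reducible surgery and any $0\le \ell\le \tfrac{(p-1)(q-1)}{2}$,
\[
\sum_{i=0}^{q-1}\bigl(V_{\ell+i}-V_{\alpha(\ell+i+p)}\bigr)=\frac{(p-1)(q-1)}{2}-\ell,
\]
obtained by summing the differences $d(S^3_{pq}(K),[\ell+i+p])-d(S^3_{pq}(K),[\ell+i])$ and noting that the $Y_q$-contributions cancel because $\pi_q$ is a bijection on any $q$ consecutive labels. Setting $\ell=0$ with all $V_i=0$ gives $(p-1)(q-1)=0$ in one line, and then \cite{threesummands} rules out $p=1$. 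Your spread argument is a legitimate alternative, but it is more computational, you only sketch the $p$ even case, and it does not yield the general $V_i$ identity that the paper reuses for Corollary~\ref{slicegenus} and the thickness bounds. The $\Spinc$ bookkeeping you flag is exactly what the paper's summation trick sidesteps: it never needs to identify which residue class corresponds to which $s_2$, only that $\pi_q$ is bijective on any block of $q$ consecutive labels.

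For $a=1$, your assertion that ``a direct inspection\ldots shows this shift property characterizes $a\equiv 1$'' is the real gap. You must show that no affine relabeling $\phi$ of $\Spinc(L(p,a))$ makes $d(L(p,a),\phi(\,\cdot\,))$ a constant translate of $d(L(p,1),\,\cdot\,)$; this is believable but is not a one-line inspection, and you have not indicated how you would prove it. The paper instead uses the closed-form difference relation $d(L(p,a),[i])-d(L(p,a),[a+i])=\tfrac{p-2i-1}{p}$ from \cite{LipshitzLee}, together with the fact that $\pi_L$ preserves self-conjugacy (pinning $\pi_L([0])$ to one of two explicit spin structures on $L(p,a)$), to reduce to the equation $s(p-s)=p-a$ for some $1\le s\le p-1$; since $s(p-s)\ge p-1$, this forces $a=1$. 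That argument resolves precisely the identification ambiguity you were worried about.
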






For fibered, hyperbolic slice knots, Theorem \ref{thm:redslice} together with Theorem \ref{fiberedHprime} implies the following, which when compared with Equation \ref{Sayaribound} shows that we cut down the possible reducing slopes on fibered, hyperbolic slice knots by half. 

\begin{corollary}\label{fiberedslice}
If $K$ is a fibered, hyperbolic slice knot and $r$ is a reducing slope, then $r \leq g$.
\end{corollary}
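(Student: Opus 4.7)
The plan is to combine Theorem \ref{thm:redslice} with Theorem \ref{fiberedHprime}; once both are in hand the corollary should follow almost formally, so the only real work is verifying that the Gordon--Luecke decomposition of a reducible surgery fits the hypothesis of Theorem \ref{thm:redslice}.

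First, I would invoke Gordon--Luecke (cited in the introduction) to conclude that any reducing slope $r$ for a knot in $S^3$ is an integer and that $S^3_r(K)$ contains a nontrivial lens space summand, so we may write $S^3_r(K) \cong L(p,a)\#Y$ for some integer $p>1$ and some closed, orientable $3$-manifold $Y$. Since $H_1(S^3_r(K)) \cong \Z/r$ is cyclic while $H_1$ of a connected sum splits as a direct sum $\Z/p \oplus H_1(Y)$, the order $q := |H_1(Y)| = |H^2(Y;\Z)|$ must be coprime to $p$, and $|r| = pq$. This places the decomposition in exactly the form required by the hypotheses of Theorem \ref{thm:redslice}.

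Next, since $K$ is assumed hyperbolic and slice, Theorem \ref{thm:redslice} applies and forces $q=1$, $a=1$, and $d(Y)=0$. In particular $H_1(Y)=0$, so by Poincar\'e duality $Y$ is an integer homology sphere, and $|r|=p$. We therefore have a decomposition $S^3_r(K) \cong L(r,1)\#Y$ with $Y$ a homology sphere.

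Finally, because $K$ is fibered of genus $g$, Theorem \ref{fiberedHprime} applies directly to this decomposition and yields $r \leq g$, as desired. The main obstacle, to the limited extent that there is one, is just the bookkeeping in the first step: confirming that the summand provided by Gordon--Luecke can be taken to be a single lens space $L(p,a)$ with the complement $Y$ satisfying $|H^2(Y;\Z)| = q$ and $\gcd(p,q)=1$, so that Theorem \ref{thm:redslice} is genuinely applicable. Everything downstream is a direct appeal to the cited results.
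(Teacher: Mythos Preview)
Your proposal is correct and follows exactly the approach the paper indicates: the paper simply asserts that Corollary~\ref{fiberedslice} is a consequence of Theorem~\ref{thm:redslice} together with Theorem~\ref{fiberedHprime}, and you have spelled out precisely how those two results combine, including the Gordon--Luecke bookkeeping needed to put the decomposition into the form required by Theorem~\ref{thm:redslice}.
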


Recall that, for slice knots $K$, $S^3_{p/q}(K)$ is homology cobordant to $L(p,q)$. Then Theorem \ref{thm:redslice} immediately implies:

\begin{corollary}
For any slice knot $K$ with a reducing slope $r$ there is a homology sphere $Y$ and a homology cobordism from $L(r,1)$ to $L(r,1) \# Y$.

\end{corollary}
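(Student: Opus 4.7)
The plan is to prove this corollary as a direct consequence of Theorem \ref{thm:redslice} together with the recalled fact that $p/q$-surgery on a slice knot is $\Z$-homology cobordant to $L(p,q)$.

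First, I would use that any reducing slope on a knot in $S^3$ is an integer (Gordon--Luecke, recalled in the introduction), so the reducing slope $r$ can be written with $p=r$ and $q=1$ in the notation of Theorem \ref{thm:redslice}. Since Theorem \ref{thm:redslice} is stated for hyperbolic slice knots, before applying it I would dispose of the non-hyperbolic cases: torus knots are not slice, and a satellite slice knot admitting a reducible surgery would, by the Cabling Conjecture for satellite knots, have to be a cable, yet non-trivial cables have strictly positive smooth slice genus and so cannot be slice. This reduces us to hyperbolic $K$.

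In the hyperbolic case, Theorem \ref{thm:redslice} produces an integer homology sphere $Y$ with $S^3_r(K) \cong L(r,1) \# Y$. Combining this decomposition with the recalled homology cobordism between $S^3_{r/1}(K)$ and $L(r,1)$ yields a homology cobordism between $L(r,1) \# Y$ and $L(r,1)$, which is precisely the content of the corollary.

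There is essentially no serious obstacle: all of the heavy lifting has been absorbed into Theorem \ref{thm:redslice}, and the remainder of the argument is a syntactic translation of hypotheses. The one place that needs a sentence of care is verifying that a reducing slope on a slice knot really does have the form $pq$ with $q=1$ required by Theorem \ref{thm:redslice}, which follows immediately from the integrality of reducing slopes.
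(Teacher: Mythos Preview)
Your overall strategy matches the paper's: the paper simply states that the corollary follows immediately from Theorem~\ref{thm:redslice} together with the fact that $S^3_{p/q}(K)$ is homology cobordant to $L(p,q)$ for slice $K$, and gives no further argument. Your reduction to the hyperbolic case is extra care that the paper omits.

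There is, however, a genuine misreading of Theorem~\ref{thm:redslice} in your write-up. The integers $p$ and $q$ in that theorem are \emph{not} the numerator and denominator of the surgery slope; they are the orders of the first homology of the two summands in the decomposition $S^3_{pq}(K)\cong L(p,a)\# Y$. In particular, $q=1$ is the \emph{conclusion} of the theorem, not a hypothesis, and it certainly does not follow from integrality of reducing slopes. The correct way to invoke the theorem is: by Gordon--Luecke there is a lens space summand $L(p,a)$, so $S^3_r(K)\cong L(p,a)\# Y$ with $|H^2(Y;\Z)|=r/p=:q$; then Theorem~\ref{thm:redslice} forces $q=1$ and $a=1$, hence $p=r$ and $Y$ is an integer homology sphere. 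As written, your argument assumes at the outset that the lens space summand has order $r$, which is precisely what needs to be shown.

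A smaller point: your claim that nontrivial cables are never slice is true but is not as immediate as you suggest; it deserves a reference or a one-line justification (e.g.\ via the $\tau$-invariant formula for cables, or the Fox--Milnor condition applied to $\Delta_{C_{p,q}(K)}(t)=\Delta_K(t^p)\Delta_{T_{p,q}}(t)$).
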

The proof of Theorem \ref{thm:redslice} also gives the following slice genus bounds.

\begin{corollary}{\label{slicegenus}}
Suppose $K$ admits a reducible surgery of the form $S^3_{pq}(K)\cong Y_p\#Y_q$ with $p>q > 1$ and relatively prime. Then $g_s(K)\geq \frac{(p-1)(q-1)}{2}>0$.
\end{corollary}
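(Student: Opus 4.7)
The plan is to follow the proof of Theorem \ref{thm:redslice} while keeping explicit track of the torsion invariants $V_i(K)$. In that proof, the slice assumption enters as the vanishing of $V_i(K)$ for all $i \geq 0$; removing it, the same mapping-cone comparison of $d$-invariants should produce instead a positivity statement about specific $V_i(K)$'s, which then converts into a slice-genus bound via Hom--Wu.

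First, I would apply the Ni--Wu mapping cone formula, which for integer $pq$-surgery gives
$$d(S^3_{pq}(K), i) = d(L(pq,1), i) - 2\max\bigl(V_i(K),\, V_{pq-i}(K)\bigr)$$
for each $\Spinc$ structure $i \in \Z/pq\Z$. Using the Chinese Remainder identification $\Z/pq \cong \Z/p \oplus \Z/q$ (valid by coprimality) together with additivity of $d$-invariants under the decomposition $S^3_{pq}(K) \cong Y_p \# Y_q$, this rearranges to
$$d(Y_p, i_p) + d(Y_q, i_q) - d(L(pq,1), (i_p,i_q)) = -2\max\bigl(V_i(K),\, V_{pq-i}(K)\bigr).$$
Taking second differences $\Delta_p \Delta_q$ over pairs $(i_p, i_p')$ and $(i_q, i_q')$ annihilates both the $Y_p$- and $Y_q$-contributions, leaving a relation purely between lens space $d$-invariants and differences of $V_i(K)$'s.

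The expectation is that these second-difference relations, evaluated on $(p-1)(q-1)/2$ lattice points in a fundamental domain --- most naturally the gap set of the numerical semigroup $\langle p, q\rangle$ --- force $V_i(K) \geq 1$ for $i$ up through $(p-1)(q-1)/2 - 1$. By the Hom--Wu bound $\nu^+(K) \leq g_s(K)$, where $\nu^+(K) = \min\{i : V_i(K) = 0\}$, this yields $g_s(K) \geq (p-1)(q-1)/2$. The main obstacle is the combinatorial/arithmetic step of matching the lens-space $d$-invariant contributions to the gap set of $\langle p, q\rangle$: the appearance of $(p-1)(q-1)/2$ as both the Seifert genus of the $(p,q)$-torus knot and the number of gaps in $\langle p, q\rangle$ is strong evidence that this indexing is correct, but verifying the required cancellations among the quadratic-in-$i$ lens-space $d$-invariants is the principal technical computation.
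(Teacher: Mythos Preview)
Your endpoint is exactly right: one wants $V_i(K)\ge 1$ for all $i<\frac{(p-1)(q-1)}{2}$, and then $\nu^+(K)\le g_s(K)$ finishes it. But your proposed cancellation mechanism---second differences $\Delta_p\Delta_q$, followed by a combinatorial matching of lens-space $d$-invariants against the gap set of $\langle p,q\rangle$---is more elaborate than necessary, and you explicitly flag that matching as the ``main obstacle'' you have not carried out. As written, the proposal is a plan whose key step is left open.

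The paper bypasses that obstacle with a cleaner cancellation. Instead of a second difference, take a \emph{single} shift by $p$: since $|H^2(Y_p)|=p$, the projection $\pi_p$ satisfies $\pi_p([i+p])=\pi_p([i])$, so the $Y_p$-contribution to $d(S^3_{pq}(K),[i+p])-d(S^3_{pq}(K),[i])$ already vanishes. The residual $Y_q$-contribution is then killed not by differencing again but by \emph{summing} over $q$ consecutive $\Spinc$ structures $i=\ell,\ldots,\ell+q-1$: the projection $\pi_q$ restricts to a bijection from any such block onto $\Spinc(Y_q)$, so $\sum_{i=0}^{q-1}d(Y_q,\pi_q[\ell+i+p])=\sum_{i=0}^{q-1}d(Y_q,\pi_q[\ell+i])$. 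What survives is the closed identity of Theorem~\ref{thm:sumdinvariants},
\[
\sum_{i=0}^{q-1}\bigl(V_{\ell+i}-V_{\alpha(\ell+i+p)}\bigr)=\frac{(p-1)(q-1)}{2}-\ell,\qquad \alpha(j)=\min\{j,pq-j\},
\]
valid for every $0\le \ell\le\frac{(p-1)(q-1)}{2}$. The positivity is now immediate: if $V_\ell=0$ for some $\ell<\frac{(p-1)(q-1)}{2}$, then by monotonicity $V_{\ell+i}=0$ for all $i\ge 0$, so the left side is $\le 0$ while the right side is $>0$. Hence $\nu^+(K)\ge\frac{(p-1)(q-1)}{2}$. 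No semigroup combinatorics or second-difference bookkeeping is required; the ``shift by $p$, then sum over a $q$-window'' trick replaces it entirely.
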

\noindent\textbf{Remark}: Corollary \ref{slicegenus} implies that if a slice genus $1$ knot $K$ has a reducible surgery with two summands carrying non-trivial homology, then the reducing slope is $6$. This implies that the two summands conjecture is true for all reducing slopes on slice genus $1$ knots except for the possibility that $S^3_6(K) \cong L(2,1)\#L(3,2)\#Y$ for $Y$ an irreducible homology sphere with $d(Y)=0$. Similarly the only possible reducing slopes for slice genus $2$ knots that could produce more than two summands are $r=6$ and $r=10$. As far as we know, Heegaard Floer theoretic invariants cannot obstruct three summands from appearing in these surgeries.

Next, we investigate how Theorem \ref{thm:redslice} may be applied to the problem of multiple reducing slopes for slice knots. This theorem and its proof are inspired by \cite{HLZ15}*{Theorem 1.6}. For the definition of $\nu$, see Definition \ref{nudef}.

\begin{theorem}{\label{multipleslopes}}

Suppose $K$ is a knot in $S^3$ with $\nu(K) <g(K)$ that also admits reducing slopes $r$ and $r+1$. Further, suppose that both $r$ and $r+1$ surgery split off an integer homology sphere summand. Then $r+1\leq g(k)$.

\end{theorem}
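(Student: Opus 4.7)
The plan is to argue by contradiction. Suppose $r+1 > g(K)$, so that $r \geq g(K)$. By hypothesis we have decompositions
\[
S^3_r(K) \cong L(r,a) \# Y_1 \quad\text{and}\quad S^3_{r+1}(K) \cong L(r+1,b) \# Y_2
\]
with $Y_1, Y_2$ integer homology spheres. Additivity of $d$-invariants under connected sum, together with the fact that each $Y_i$ has a single $\Spinc$ structure contributing only a constant shift $d(Y_i)$, means that the $d$-invariant function on the $\Spinc$ structures of each surgery is pinned down, up to that global constant, by the lens-space summand alone.

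I would next apply the Ozsv\'ath--Szab\'o mapping cone formula to express the $d$-invariants of $S^3_{r+1}(K)$ (and in parallel those of $S^3_r(K)$) in terms of the torsion quantities $V_i(K)$ and $H_i(K)$ of the knot Floer chain complex of $K$. The hypothesis $r+1 > g(K)$ is precisely what allows enough of the mapping cone to collapse into usable form. Equating the mapping cone output with the lens-space-plus-constant pattern from the previous paragraph then yields, for each $\Spinc$ structure at each slope, a rigid equation in the $V_i, H_i$ data.

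Following the strategy of \cite{HLZ15}*{Theorem 1.6}, the two constraint systems (one at slope $r$, one at slope $r+1$) must be simultaneously satisfied by the same $V_i(K), H_i(K)$. In the L-space setting, where $\nu(K) = g(K)$, the torsion invariants form a rigid staircase pattern that uniquely pins down $r = 2g(K)-1$ as the only possible reducing slope. The hypothesis $\nu(K) < g(K)$ instead guarantees a non-trivial asymmetry between $V_i(K)$ and $H_{n-i}(K)$ at some $\Spinc$ structure in the relevant range; I aim to show that this asymmetry cannot be absorbed simultaneously by two consecutive lens-space summands once $r+1 > g(K)$, producing the desired contradiction and hence $r+1 \leq g(K)$.

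The main obstacle will be navigating the intermediate regime $g(K) < r+1 \leq 2g(K)-1$, where the clean Ni--Wu large-surgery formula is not directly available and each $d$-invariant of $S^3_{r+1}(K)$ genuinely involves both a $V_i(K)$ and an $H_{r+1-i}(K)$ contribution. Carefully identifying $\Spinc$ structures on $S^3_r(K)$ with those on $L(r,a)$ (and analogously at slope $r+1$), accounting for the two unknown shifts $d(Y_1)$ and $d(Y_2)$, and exploiting the tight relationship between the $d$-invariants of $L(r,a)$ and $L(r+1,b)$ as the lens-space parameters vary is where I expect the technical heart of the argument to lie.
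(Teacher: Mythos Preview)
Your proposal has a genuine gap: the $d$-invariant approach is not sensitive enough to the hypothesis $\nu(K)<g(K)$, and in fact cannot yield a contradiction in the main case of interest. Consider a slice knot $K$ (the case driving Corollary~\ref{slicemultipleslopes}). Then $V_i(K)=0$ for all $i\ge 0$, so by the Ni--Wu formula $d(S^3_n(K),i)=d(L(n,1),i)$ for every $n>0$ and every $\Spinc$ structure $i$. Theorem~\ref{thm:redslice} already forces $a=b=1$ and $d(Y_1)=d(Y_2)=0$, so at both slopes the $d$-invariants of the surgery and of the lens-space-plus-homology-sphere summand agree identically. There is simply no residual $d$-invariant obstruction to exploit, regardless of how large $r$ is. More generally, $\nu(K)$ is defined via the hat-level maps $(\hat v_s)_*$, and the condition $\nu(K)<g(K)$ does not translate into any constraint on the sequence $V_i$, which is all the $d$-invariants see.

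The paper's proof works at the level of \emph{ranks} of $\widehat{HF}$ rather than $d$-invariants. The key input is Lemma~\ref{sym}: splitting off an integer homology sphere forces $\dim\widehat{HF}(S^3_r(K),i)$ to be constant across all $\Spinc$ structures, and likewise at slope $r+1$. Feeding $r=g+i\ge g$ into the hat mapping cone expresses these dimensions in terms of $n_k=\dim H_*(\widehat A_k)$; comparing the two slopes forces $n_{g-1}=1$ and $\rank(\hat h_{1-g}\oplus\hat v_{i+1})=1$. The contradiction then comes from genus detection: since $\nu(K)<g(K)$ the map $(\hat v_{g-1})_*$ is nonzero, yet $\hat v_{g-1}$ is never an isomorphism, so $n_{g-1}>1$. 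This is exactly the information living in $HF_{red}$ that your $d$-invariant framework discards.
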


\begin{corollary}{\label{slicemultipleslopes}}

Suppose that $r$ and $r+1$ are simultaneous reducing slopes for a slice knot $K$ in $S^3$. Then $r+1\leq g(K)$. 

\end{corollary}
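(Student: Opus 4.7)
The plan is to deduce the corollary from Theorem \ref{multipleslopes}, whose two extra hypotheses (beyond the existence of consecutive reducing slopes) are that $\nu(K) < g(K)$ and that both surgeries split off an integer homology sphere summand. First I would reduce to the case that $K$ is hyperbolic. By Scharlemann's theorem together with Moser's classification of torus knot surgeries, a non-hyperbolic knot with a reducing surgery is a cable $C_{p,q}(J)$ with $|p|, |q|\ge 2$, whose reducible surgery splits as $L(p,q) \# S^3_{q/p}(J)$ with coprime first-homology orders $p$ and $q$. Corollary \ref{slicegenus} applied to this surgery then gives $g_s(K) \ge (p-1)(q-1)/2 > 0$, contradicting that $K$ is slice. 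Hence $K$ is hyperbolic.

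Having hyperbolicity in hand, I would next apply Theorem \ref{thm:redslice} to each of the two reducing slopes. For $r' \in \{r, r+1\}$, Gordon--Luecke gives $S^3_{r'}(K) \cong L(p', a') \# Y'$ with $r' = p' q'$ and $|H^2(Y';\Z)| = q'$. Theorem \ref{thm:redslice} then forces $q' = 1$ for both slopes, so each $Y'$ is an integer homology sphere. This is exactly the second hypothesis of Theorem \ref{multipleslopes}.

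Finally, I would check $\nu(K) < g(K)$. Since $K$ is slice, $\tau(K) = 0$, and as $\nu(K) \in \{\tau(K), \tau(K)+1\}$ we have $\nu(K) \le 1$. Any integer reducing slope satisfies $|r|\ge 2$, so $r + 1 \ge 3$; Sayari's bound (Equation \ref{Sayaribound}) applied to the hyperbolic knot $K$ then gives $r + 1 \le 2g(K) - 1$, whence $g(K) \ge 2 > \nu(K)$. Theorem \ref{multipleslopes} now delivers $r + 1 \le g(K)$. I expect the middle step to be the main obstacle --- producing an integer homology sphere summand on both sides --- because without Theorem \ref{thm:redslice} the general two-summands result for slice knots only forces the non-lens-space summand to have the correct order of first homology, not that it is itself a homology sphere.
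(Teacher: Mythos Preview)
Your proof is correct and follows essentially the same route as the paper, whose proof is the one-line ``This follows immediately from Theorem \ref{thm:redslice} and Theorem \ref{multipleslopes}.'' You have simply made explicit the hypothesis-checking the paper leaves to the reader: the reduction to the hyperbolic case (needed to invoke Theorem \ref{thm:redslice}), the use of Gordon--Luecke to produce a lens space summand on each side, and the verification that $\nu(K)\le 1 < g(K)$ via $\tau(K)=0$ and Sayari's bound.
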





The proof of Theorem \ref{thm:redslice} also enables us to bound the thickness of a hyperbolic, L-space knot under specific assumptions on its associated reducible surgery. The thickness of $K$, denoted $th(K)$, is defined to be the maximal $\delta$-grading among generators of $\widehat{\textit{HFK}}(K)$, where $\delta(x)=A(x)-M(x)$ is the difference of Alexander and Maslov gradings.

\begin{theorem}
Suppose $K$ is an L-space knot of genus $g$, and admits a reducible surgery of the form $S^3_{2g-1}(K) \cong Y_p \# Y_q$ with $p>q>1$ relatively-prime. Then
\[
q-1 \leq th(K) \leq \frac{p+q}{2}.
\]
\label{thm:lspacethickbound}
\end{theorem}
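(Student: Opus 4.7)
The plan is to deploy the $d$-invariant comparison underlying Theorem~\ref{thm:redslice}, now specialized to L-space knots, where the torsion coefficients $V_i(K)$ are determined explicitly by the symmetrized Alexander polynomial via the formula $V_i - V_{i+1} = \sum_{k>i} a_k \in \{0,1\}$. Under the Chinese Remainder identification $\Z/(pq)\cong\Z/p\oplus\Z/q$ of $\Spinc$ structures, equating the large-surgery expression
\[
d(S^3_{2g-1}(K), i) = d(L(2g-1,1), i) - 2V_{\min(i,\,pq-i)}(K)
\]
with connected-sum additivity $d(Y_p\#Y_q, i) = d(Y_p, [i]_p) + d(Y_q, [i]_q)$ yields a large linear system in the $V_i$ from which structural information about $\Delta_K$ can be extracted.

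For the lower bound $q-1 \le th(K)$, I would fix a residue class modulo $p$ and compare $d$-invariants of $S^3_{pq}(K)$ at $\Spinc$ structures differing by multiples of $p$; the differences depend only on $Y_q$ data. Telescoping then expresses partial sums of jumps $V_i - V_{i+1}$ in terms of $Y_q$ $d$-invariants. Since each unit jump records a nonzero coefficient of $\Delta_K$, these constraints should force the nonzero Alexander coefficients to cluster and produce at least one run of $q$ consecutive zero coefficients. For an L-space knot, whose $\widehat{HFK}$ supports a single generator at each Alexander grading with nonzero coefficient and whose Maslov gradings are determined by the staircase pattern, a gap of $q$ consecutive zero coefficients translates directly into a jump of size $q-1$ in the $\delta$-grading between adjacent generators, giving $th(K) \ge q-1$. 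For the upper bound $th(K) \le (p+q)/2$, I would run the symmetric argument with residues modulo $q$ to extract analogous constraints from $Y_p$, and expect the combined mod-$p$ and mod-$q$ constraints to bound the total number of nontrivial steps in the staircase of $CFK^\infty(K)$ by $(p+q)/2$, which caps $th(K)$ accordingly.

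The main obstacle will be the combinatorial step translating the $d$-invariant linear identities into precise statements about runs and gaps among the Alexander coefficients, and thence about the $\delta$-grading spread on $\widehat{HFK}(K)$. The upper bound appears the most delicate: it requires a packing argument juggling both congruence systems simultaneously, careful handling of the boundary $\Spinc$ structures where the surgery formula switches between $V_i$ and $V_{pq-i}$, and attention to the $\Spinc$-conjugation symmetry that effectively halves the independent constraints.
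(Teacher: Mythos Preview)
For the lower bound your plan is essentially the paper's: both derive the identity
\[
\sum_{i=0}^{q-1}\bigl(V_{\ell+i}-V_{\alpha(\ell+i+p)}\bigr)=\frac{(p-1)(q-1)}{2}-\ell
\]
from $\Spinc$ periodicity on the $Y_p$ factor, and then translate runs of equal $V_i$'s into staircase step lengths and hence $\delta$-jumps (your Alexander-coefficient-gap language is equivalent). Two points of execution you should anticipate: (i) at the key value $\ell=\tfrac{(p-1)(q-1)}{2}-1$ the sum equals $1$, and one must observe that the $q$ terms pair off under the symmetry of $\alpha$ so that all but two cancel, yielding only a central $q$-block $V_{\ell+1}=\cdots=V_{\ell+q}$; (ii) a $q$-block alone gives $th(K)\ge q-2$, not $q-1$. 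The paper closes this gap by a case split on $m=\tfrac{p-q}{2}$: when $m=1$ one appends the L-space-knot fact $V_{g-1}=V_{g-2}=1$ to extend the block, and when $m>1$ one feeds in the identity at $\ell-q$ to locate an adjacent block sharing a value. Your sketch does not yet account for this last step.

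For the upper bound your proposal diverges from the paper and, as stated, targets the wrong quantity. Thickness for an L-space knot is governed by the lengths of individual staircase steps (each step of length $\ell$ contributes a $\delta$-jump of $\ell-1$ between adjacent generators), so bounding the \emph{number} of nontrivial steps does not cap $th(K)$: a staircase with few but long steps has large thickness. The paper does not invoke the mod-$q$ relations at all. Instead, still using only the mod-$p$ identities above, it bounds the length of every maximal block of equal $V_i$'s: for indices below $g-p$, a putative block of length $q+1$ would (via two instances of the identity $q+1$ apart) force $V_{j+p+q+1}>V_{j+p}$, contradicting monotonicity of the $V_i$'s, so all such blocks have length at most $q$; the finite window between $V_{g-p}$ and $V_{g-1}$ is then analyzed directly to get length at most $\tfrac{p+q}{2}+2$. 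Your instinct that the upper bound is the delicate half is correct, but the argument should aim at the maximal run of equal $V_i$'s rather than a count of steps.
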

\noindent If $K$ is a hyperbolic, L-space knot with a reducible surgery as above with $p-q=2$, then its Alexander polynomial is completely determined. Its explicit form is provided in Subsection \ref{subsec:Alexanderpoly}.

\begin{corollary}
Suppose $K$ is an L-space knot of genus $g$, and admits a reducible surgery of the form $S^3_{2g-1}(K) \cong Y_p \# Y_q$ with $p,q>1$ relatively-prime and $p-q=2$. Then $K$ is unique up to knot Floer homology.
\end{corollary}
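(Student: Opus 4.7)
The plan is to reduce the corollary to a uniqueness statement for the Alexander polynomial. By the Ozsv\'ath--Szab\'o classification of L-space knot Alexander polynomials, the knot Floer chain complex (and in particular $\widehat{\textit{HFK}}(K)$) of an L-space knot is determined by $\Delta_K(t)$. Equivalently, it is determined by the sequence of torsion coefficients $V_0(K)\geq V_1(K)\geq\cdots\geq V_g(K)=0$, which for an L-space knot drops by $0$ or $1$ at each step and encodes the exponents of $\Delta_K(t)$. So it suffices to show that the hypotheses determine this sequence uniquely.

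To that end, I would extract a stronger conclusion from the proof of Theorem \ref{thm:lspacethickbound}. That argument feeds the mapping cone formula into the reducible surgery $S^3_{2g-1}(K)\cong Y_p\#Y_q$, producing equations
\[
d\bigl(S^3_{2g-1}(K),\mathfrak{s}\bigr)=d(Y_p,\mathfrak{s}_p)+d(Y_q,\mathfrak{s}_q)
\]
for each $\Spinc$ structure, where the left-hand side is expressible through a single $V_i(K)$ via the Ozsv\'ath--Szab\'o surgery formula for L-space knots. Under the extra assumption $p-q=2$, Theorem \ref{thm:lspacethickbound} pinches the thickness into the narrow window $q-1\leq th(K)\leq q+1$, which controls both the total number of unit drops in the sequence $(V_i)$ and roughly where they can occur along $i=0,1,\dots,g$.

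The main step is then to combine this narrow window with the full system of $d$-invariant equations to force a unique drop pattern. Two pieces of structure help here: first, $Y_p$ and $Y_q$ are themselves L-spaces with $|H_1|$ equal to $p$ and $q$, so their $d$-invariants are rational expressions in $(p,q)$ with predictable symmetries coming from the L-space classification; second, the relation $p-q=2$ couples $\Spinc$ structures on the two summands tightly enough that the surgery equations at symmetric indices interlock. I expect the main obstacle to be the careful enumeration of drop patterns compatible with both the thickness window and the full $d$-invariant system, and the verification that only one solution survives. The resulting $V_i$ sequence matches the explicit Alexander polynomial recorded in Subsection \ref{subsec:Alexanderpoly}, after which invoking the Ozsv\'ath--Szab\'o classification one more time finishes the proof.
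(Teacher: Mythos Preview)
Your reduction is correct: for an L-space knot, $\widehat{HFK}(K)$ is determined by the sequence $(V_i)_{0\leq i\leq g}$, so it suffices to show that this sequence is forced. The gap is in the mechanism you propose for forcing it.

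First, the claim that ``$Y_p$ and $Y_q$ are themselves L-spaces \ldots\ so their $d$-invariants are rational expressions in $(p,q)$'' is unfounded. It is true that $S^3_{2g-1}(K)$ is an L-space and hence so are its summands, but being an L-space with $|H_1|=p$ does not determine the $d$-invariants; that holds only for lens spaces. The paper never computes $d(Y_p,\cdot)$ or $d(Y_q,\cdot)$, and your individual $d$-invariant equations $d(S^3_{2g-1}(K),\mathfrak{s})=d(Y_p,\mathfrak{s}_p)+d(Y_q,\mathfrak{s}_q)$ therefore have two unknowns on the right, not zero. Second, the thickness window $q-1\leq th(K)\leq q+1$ is far too coarse for an enumeration: it bounds only the length of the longest block of equal $V_i$'s, not the placement of the $\sim g=\tfrac{pq+1}{2}$ many drops.

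The paper bypasses both problems via Theorem~\ref{thm:sumdinvariants}. Summing the $d$-invariant equation over $q$ consecutive $\Spinc$ structures makes the unknown $d(Y_q,\cdot)$ contributions cancel, leaving for each $0\leq \ell\leq k+1$ (with $k=\tfrac{(p-1)(q-1)}{2}-1$) a relation purely among the $V_i$'s:
\[
\sum_{i=0}^{q-1}\bigl(V_{\ell+i}-V_{\alpha(\ell+i+p)}\bigr)=k+1-\ell.
\]
The point of the hypothesis $p-q=2$ is that the central $q$-block from Lemma~\ref{lem:lspacethicklbound} then reaches all the way to $V_{g-2}$, so together with $V_{g-1}=1$ every $V_i$ with $i\geq k+1$ is already known. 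One now decreases $\ell$ one step at a time: in the equation with index $\ell$, every term except the first involves $V_i$'s with $i>\ell$, so the single unknown $V_\ell$ is determined. This recursion---not a thickness-constrained enumeration---is what pins down the entire sequence and yields the explicit $\Delta_K(t)$ recorded in Corollary~\ref{cor:m1Apoly}.
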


\noindent Theorem \ref{thm:lspacethickbound} together with Theorems \ref{fiberedHprime} and \cite[Theorem 1.3]{HLZ15} provide some obstruction to reducible surgery along a hyperbolic, L-space knot.

\begin{corollary}
Suppose $K$ is a hyperbolic, L-space knot and that $t$ is the smallest prime factor of $2g(K)-1$. If $th(K) < t-1$, then $K$ does not admit a reducing slope.
\label{cor:smallthick}
\end{corollary}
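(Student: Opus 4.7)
The plan is to prove the contrapositive: assuming the hyperbolic L-space knot $K$ admits a reducing slope, I will deduce that $th(K) \geq t - 1$. By \cite[Theorem 1.3]{HLZ15}, any reducing slope on an L-space knot equals $r = 2g(K) - 1$, so the reducible surgery decomposes as $S^3_r(K) \cong Y_p \# Y_q$ with $|H^2(Y_p;\Z)| = p$ and $|H^2(Y_q;\Z)| = q$, and one summand is a lens space by Gordon--Luecke. The first homology of $S^3_r(K)$ is cyclic of order $2g(K)-1$, forcing $\gcd(p,q) = 1$ and $pq = 2g(K)-1$.

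The key preliminary step is to rule out $q = 1$. If $q = 1$, then $Y_q$ is an integer homology sphere, so the lens space summand must be $Y_p = L(r,a)$. Since L-space knots are fibered (Ni), Theorem \ref{fiberedHprime} applies and gives $r \leq g(K)$, hence $2g(K) - 1 \leq g(K)$ and $g(K) \leq 1$. The only non-trivial L-space knot of genus at most one is the right-handed trefoil, which is a torus knot rather than hyperbolic, a contradiction. Therefore $q \geq 2$. Relabeling so that $p \geq q$, and noting that $\gcd(p,q) = 1$ together with $q > 1$ precludes $p = q$, I arrive at $p > q > 1$.

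With this setup, the hypotheses of Theorem \ref{thm:lspacethickbound} are met, so the theorem yields $th(K) \geq q - 1$. To finish I would observe that, since $q$ divides $2g(K) - 1$ and $q > 1$, every prime factor of $q$ is a prime factor of $2g(K)-1$, so $q$ is bounded below by the smallest such prime, namely $q \geq t$. Combining gives $th(K) \geq q - 1 \geq t - 1$, contradicting the hypothesis $th(K) < t - 1$.

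The one step I expect to require care is the elimination of the $q = 1$ case, since this is where the hyperbolic hypothesis is genuinely used, entering through Theorem \ref{fiberedHprime} together with the classification of low-genus L-space knots; the rest of the proof is a short divisibility observation paired with a direct appeal to Theorem \ref{thm:lspacethickbound}.
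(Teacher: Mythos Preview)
Your argument is correct and follows essentially the same route as the paper: invoke \cite{HLZ15} to pin the reducing slope at $2g(K)-1$, eliminate the possibility of an integer homology sphere summand via Theorem~\ref{fiberedHprime} (this is exactly the content of Proposition~\ref{prop:GAprop}, which you reprove inline), and then apply the lower bound $q-1 \leq th(K)$ together with $q \geq t$. The only cosmetic difference is that you cite Theorem~\ref{thm:lspacethickbound} rather than Lemma~\ref{lem:lspacethicklbound} for the thickness bound, and you dispose of the $g \leq 1$ case by invoking the hyperbolic hypothesis (ruling out the trefoil) where the paper instead observes that $2g-1 \leq g$ already forces the slope to be at most $1$, which cannot be a reducing slope; one small point of order is that your relabeling $p \geq q$ should precede, not follow, the elimination of $q=1$, since otherwise you have only excluded one of the two labels from being $1$.
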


We say $K$ is thin if all generators of $\widehat{\textit{HFK}}(K)$ lie in the same $\delta$-grading, or equivalently $th(K)=0$. Using \cite[Theorem 1.2]{DeY21b} and Corollary \ref{cor:smallthick}, we have the following.

\begin{theorem}
Thin knots satisfy the Cabling Conjecture.
\label{thm:thinCC}
\end{theorem}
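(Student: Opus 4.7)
The plan is to leverage \cite[Theorem 1.2]{DeY21b} together with Corollary \ref{cor:smallthick} to handle all possible thin knots admitting reducible surgeries. First I would split into cases based on the geometric type of the knot. Any knot in $S^3$ is either hyperbolic, a torus knot, or a satellite knot; the Cabling Conjecture is already known for torus knots (Moser) and satellite knots (Schar), so it is enough to consider a hyperbolic, thin knot $K$. By \cite[Theorem 1.2]{DeY21b}, a hyperbolic thin knot that admits a reducible surgery must itself be an L-space knot, so the only remaining case is a hyperbolic, thin L-space knot admitting a reducing slope.

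To dispatch that last case I would argue by contradiction. Assume $K$ is a hyperbolic, thin L-space knot with a reducing slope $r$. Since $K$ is an L-space knot, \cite[Theorem 1.3]{HLZ15} forces $r = 2g(K)-1$. Combined with the Gordon--Luecke constraint $|r|>1$ from Equation \eqref{Sayaribound}, this gives $2g(K)-1 \geq 3$, so the integer $2g(K)-1$ has a smallest prime factor $t$ with $t \geq 2$, and in particular $t-1 \geq 1$. On the other hand, thinness means $th(K)=0$, and hence $th(K) < t-1$. Corollary \ref{cor:smallthick} then implies that $K$ admits no reducing slope, contradicting the existence of $r$.

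No essential obstacle arises; the proof is a clean assembly of previously established results. The only point requiring any verification is that the hypothesis $th(K)<t-1$ of Corollary \ref{cor:smallthick} is indeed satisfied, which reduces to confirming that $2g(K)-1 \geq 2$ so that a prime factor exists at all. This is guaranteed by the Gordon--Luecke lower bound $|r|>1$ on reducing slopes.
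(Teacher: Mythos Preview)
Your proposal is correct and follows essentially the same approach as the paper, which simply states that the theorem follows from \cite[Theorem 1.2]{DeY21b} together with Corollary \ref{cor:smallthick}. One small remark: since $2g(K)-1$ is odd, its smallest prime factor automatically satisfies $t\geq 3$, so $t-1\geq 2>0=th(K)$ and you do not even need the Gordon--Luecke bound to verify the hypothesis of Corollary \ref{cor:smallthick} once you know $g(K)\geq 2$; but your route through $|r|>1$ works just as well.
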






\section*{Organization}
We only consider surgeries with positive slopes, and mirror knots to achieve this whenever necessary. All manifolds are assumed to be compact, connected, oriented 3-manifolds, unless otherwise stated. Coefficients in Floer homology are also taken to belong to $\mathbb{F}=\mathbb{F}_2$.

\section*{Acknowledgements}
The authors are grateful to Tye Lidman for suggesting that \cite{Acuna}*{Proposition 1.4} may be used for some cases involving L-space knots. H.B. thanks his advisor Robert Lipshitz for helpful conversations and support.

\section{Background}

In this section we review the background necessary to prove Theorem \ref{thm:redslice}.

\subsection{$\Spinc$ Structures}

Let $\Spinc(Y)$ denote the set of $\Spinc$ structures on $Y$, and recall that $\Spinc(Y)$ is an affine copy of $H^2(Y;\Z)$. Given a choice of $\Spinc$ structure $s_0$ on $Y$, every other $\Spinc$ structure satisfies $s=s_0+a$ for some $a \in H^2(Y;\Z)$. Furthermore we have an identification $\Spinc(Y_1\#Y_2)=\Spinc(Y_1)\times \Spinc(Y_2)$ and the projection maps onto each factor, $\pi_{Y_1}$ and $\pi_{Y_2}$, intertwine the conjugation actions. Therefore, for $s \in \Spinc(Y_1\#Y_2)$ a spin structure, both $\pi_{Y_1}(s)$ and $\pi_{Y_2}(s)$ are spin structures on $Y_1$ and $Y_2$ respectively.
Next, observe that if $p=|H_1(Y_1;\Z)|$, then $\pi_{Y_1}(s+p)=\pi_{Y_1}(s)$ for any $s \in \Spinc(Y_1\#Y_2)$. This gives a relation among the $d$-invariants of reducible three-manifolds that arise as Dehn surgery along a knot in $S^3$. 


For surgeries on knots in $S^3$, we fix throughout an identification of $\Spinc(S^3_{p/q}(K))$ with $\Z/p\Z$, given by $\sigma : \Z/p\Z \to \Spinc(S^3_{p/q}(K))$ which sends $[i] \to \sigma([i])$ and satisfies $\sigma([i+1])-\sigma([i])=[K'] \in H_1(S^3_{p/q}(K)) \cong \Spinc(S^3_{p/q}(K))$, where $[K']$ is the homology class of the dual knot. For more details on this assignment, see \cite{doflens}*{Section4.1}. We will often abuse notation and write $i$ or $[i]$ for the image of $[i]$ under the map $\sigma$.


\subsection{Heegaard Floer Homology}

Heegaard Floer homology is an invariant of closed, oriented three manifolds that was introduced by Oszv\'{a}th and Szab\'{o} in \cite{Holomorphic}. We will assume familiarity with all flavors of Heegaard Floer homology, as well as the $\Z \oplus \Z$-filtered knot Floer complex $\textit{CFK}^{\infty}(K)$ for knots $K$ in $S^3$ defined in \cite{Holomorphic} and \cite{Ras}. For the readers convenience, we give a brief review of the structure of $\textit{HF}^+(Y,\mathfrak{s})$, the properties of the $d$ invariants, and the mapping cone formula, since they will be used in our main arguments in the next section.

Given a rational homology three-sphere, consider the invariants $\widehat{\textit{HF}}(Y)$, $\textit{HF}^+(Y)$, and $\textit{HF}^{\infty}(Y)$. These are a finite dimensional $\F$-vector space, an $\F[U]$-module, and an $\F[U,U^{-1}]$-module respectively. Further, we have $\textit{HF}^{\circ}(Y) \cong \bigoplus_{\mathfrak{s} \in \Spinc(Y)} HF^{\circ}(Y,\mathfrak{s})$ for $\circ \in \{\,\widehat{\,\,} \,,+,\infty\}$. 

For any rational homology three-sphere $Y$ with $\mathrm{Spin^c}$ structure $\mathfrak{s}$, we have $HF^{\infty}(Y,\mathfrak{s}) \cong \F[U,U^{-1}]$. Also, $HF^+(Y,\mathfrak{s})$ decomposes non-canonically into two pieces. The first is the image of $HF^{\infty}(Y,\mathfrak{s})$ in $HF^+(Y,\mathfrak{s})$. This summand is isomorphic to $\F[U,U^{-1}]/U\F[U]$, which is called the \textit{tower} and is denoted $\mathcal{T}^+$. The grading of $1 \in \mathcal{T}^+$ is an invariant of the pair $(Y,\mathfrak{s})$ and is denoted $d(Y,\mathfrak{s})$. The rational number $d(Y,\mathfrak{s})$ is called the correction term or $d$ invariant. The second summand in $HF^+(Y,\mathfrak{s})$ is the quotient by the image of $HF^{\infty}(Y,\mathfrak{s})$ and is denoted $HF_{red}(Y,\mathfrak{s})$. It is a finite dimensional $\F$ vector space annihilated by a high enough power of $U$.

The $d$-invariant has many useful properties. Among them we mostly use the following:
\begin{enumerate}
\item Suppose $\overline{\mathfrak{s}}$ is the image of $\mathfrak{s}$ under conjugation. Then $d(Y,\overline{\mathfrak{s}})=d(Y,\mathfrak{s})$.

\item For pairs $(Y_1,\mathfrak{s}_1)$ and $(Y_2,\mathfrak{s}_2)$, 
\begin{equation}\label{eqn: additivity}
d(Y_1\#Y_2,\mathfrak{s}_1\# \mathfrak{s}_2)=d(Y_1,\mathfrak{s}_1)+d(Y_2,\mathfrak{s}_2)
\end{equation}
\item $d$ is a homology cobordism invariant: If $W: Y \to Y'$ is a $\Z$ homology cobordism and there is a $\Spinc$ structure on $W$ that restricts to $\mathfrak{s}$ on $Y$ and $\mathfrak{s'}$ on $Y'$, then $d(Y,\mathfrak{s})=d(Y',\mathfrak{s'})$. 

\end{enumerate}

Due to \cite{OS08}, the $d$-invariants of $(p/q)$-surgery along a knot are related to the $d$-invariants of $L(p,q)$. The latter are determined in \cite{doflens}*{Proposition 4.8}, where they show 
\begin{equation}\label{eqn: dinvts}d(L(p,q),i)=\dfrac{(2i+1-p-q)^2-pq}{4pq}-d(L(q,r),j),\end{equation}
with $r$ and $j$ the reductions of $p$ and $i$ modulo $q$, respectively. This formula together $d(S^3)=0$ allows one to determine the $d$-invariants of a lens space recursively. 

Due to \cite{LipshitzLee}*{Proposition 5.3}, the $d$-invariants of $L(p,a)$ also satisfy the quite useful relation

\begin{equation}
d(L(p,a),[i])-d(L(p,a),[a+i])=\frac{p-2i-1}{p}.
\label{eqn:LLdinvt}
\end{equation}

\subsection{The Mapping Cone Formula and the $\nu^+$ Invariant}\label{mappingcone}

In this section, we establish some terminology and notation for the mapping cone formula and the $\nu^+$ invariant. For more details, see \cite{fourgenus} and \cite{mappingcone}. Material from this section will be used to establish the claimed slice genus bounds and the bound on multiple reducing slopes. 

As above, we write $HF^{\circ}$ to mean either the plus or hat version of Floer homology. Let $C=CFK^{\infty}(S^3,K)$ denote the knot Floer complex associated to $K$. This is a $\Z \oplus \Z$ filtered $\Z$ graded chain complex over $\F[U,U^{-1}]$, where the $U$ action lowers the filtration degree by one. Associated to $C$ are the following quotient and sub-quotient complexes useful for computing the plus and hat versions of Floer homology of manifolds arising as Dehn surgery along $K$. To this end, define:
$$A^+_k:=C\{\max\{i,j-k\} \geq 0\} \quad \mathrm{and} \quad \widehat{A}_k:=C\{\max\{i,j-k\}=0\}$$ as well as $$B^+:=C\{i \geq 0\} \quad \mathrm{and} \quad\widehat{B}:=C\{i=0\}$$ where $i$ and $j$ refer to the two filtration degrees. From the definition of $CFK^{\infty}(S^3,K)$ the complex $B^{\circ}$ is isomorphic to $CF^{\circ}(S^3).$ 

There is an obvious map $v^+_k:A^+_k \to B^+$ defined by projection. Similarly, there is a map $h^+_k:A^+_k \to B^+$ which projects to $C\{j \geq k\}$, shifts to $C\{ j \geq 0\}$ via multiplication by $U^k$, and then applies a chain homotopy equivalence between $C\{i \geq 0\}$ and $C\{j \geq 0 \}$ (both of which compute $CF^+(S^3)$, so by general theory are chain homotopic). There are similar maps for the hat versions. 

Just as $HF^+(Y,\mathfrak{s})$ decomposes as a tower and a reduced part, the homology of the quotient complexes $A^+_k(K)$ decompose non-canonically as $\mathcal{T}^+ \oplus A^{red}_k$. The maps $v^+_k$ and $h^+_k$ are isomorphisms for large values of $k$ and so represent multiplication by some non-negative power of $U$, say $U^{V_k}$ and $U^{H_k}$ respectively when restricted to the tower summand in each $A^+_k$. The non-negative integers $V_k$ and $H_k$ are concordance invariants of $K$, satisfy $H_k=V_{-k}$, $H_k=V_k+k$, and 
\begin{equation}
V_k-1\leq V_{k+1}\leq V_k.
\end{equation}
Furthermore, for each $i$, we have \cite{Ras}*{Corollary 7.4}
\begin{equation}\label{localh}
V_i \leq \left\lceil\frac{g_4(K)-i}{2}\right\rceil.
\end{equation}
The $V_i$ also determine the correction terms or $d$ invariants of surgery along the knot $K$:

\begin{theorem}\cite{cosmetic}*{Proposition 1.6}\label{Ni-Wu} For $p,q \geq 0$ and $0 \leq i \leq p-1$. we have:
\begin{equation}\label{eqn: Ni-Wu} d(S^3_{p/q}(K),i)=d(L(p,q),i)-2\max\{V_{\lfloor i/q \rfloor},V_{\lfloor \frac{p+q-1-i}{q} \rfloor}\}
\end{equation}

\end{theorem}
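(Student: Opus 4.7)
The plan is to apply the Ozsv\'ath--Szab\'o mapping cone formula. For each $\Spinc$ structure $[i]$ on $S^3_{p/q}(K)$, this formula realizes $HF^+(S^3_{p/q}(K),[i])$ as the homology of a finite mapping cone $\mathbb{X}^+_{[i]}$ built from copies of the subquotient complexes $A^+_k(K)$ (with $k$ ranging over an arithmetic progression determined by $[i]$, $p$, and $q$) and copies of $B^+ \cong CF^+(S^3)$, glued by the projection maps $v^+_k$ and $h^+_k$. The $d$-invariant is then extracted by reading off the grading of the bottom of the surviving tower $\mathcal{T}^+$ in this homology.

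First I would treat the unknot as a calibration. For the unknot all $V_k = H_k = 0$ (for $k \geq 0$, with symmetric conventions for $k<0$), every $A^+_k$ is a copy of $\mathcal{T}^+$, and the $v^+_k, h^+_k$ become identities. The mapping cone then directly computes $HF^+(L(p,q),[i])$ and identifies the grading of the bottom of its tower with $d(L(p,q),[i])$. This furnishes the ``lens space baseline'' appearing on the right-hand side of the formula.

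Next, for a general knot $K$, I would split each $A^+_k \cong \mathcal{T}^+ \oplus A^{red}_k$ non-canonically and recall that on the tower summand $v^+_k$ and $h^+_k$ act by multiplication by $U^{V_k}$ and $U^{H_k}$, respectively. Since the reduced summands $A^{red}_k$ are $U$-torsion, quotienting them out produces a smaller mapping cone with the same $\mathcal{T}^+$ (and in the same grading) in its homology. In this ``tower-only'' cone the problem becomes purely combinatorial: a direct sum of $\mathcal{T}^+$'s connected by $U$-power maps, whose homology's tower grading can be read off directly.

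Finally, I would identify the combinatorial bottleneck. Because $V_k$ is non-increasing in $k$ while $H_k = V_{-k}$ is non-decreasing, and both vanish on their respective tails, the only indices in the arithmetic progression that can obstruct the tower from descending are the two extremal ones. A careful index count shows these are precisely $\lfloor i/q \rfloor$ (bounding the $V$-contribution from the small end) and $\lfloor (p+q-1-i)/q \rfloor$ (bounding the $H$-contribution from the large end, converted to a $V$ via $H_k = V_k + k$ together with the grading shifts built into the $h^+_k$ map). The binding constraint is the larger of the two, and the bottom of the surviving tower therefore sits at $d(L(p,q),[i])$ shifted down by twice this maximum, which is the claimed formula. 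The main obstacle is the bookkeeping: correctly identifying the arithmetic progression of $k$-values for each $\Spinc$ structure $[i]$, tracking the absolute grading shifts on each summand of $\mathbb{X}^+_{[i]}$, and verifying that the extremal indices in the progression are exactly the two floor functions appearing in the statement. This combinatorial step, rather than any deep Floer-theoretic input, is where the floor-function terms $\lfloor i/q \rfloor$ and $\lfloor (p+q-1-i)/q \rfloor$ enter the formula.
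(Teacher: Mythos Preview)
The paper does not give its own proof of this statement: it is quoted verbatim as \cite{cosmetic}*{Proposition 1.6} and used as a black box in the background section. There is therefore nothing in the paper to compare your proposal against. Your outline is, in fact, a reasonable summary of the argument Ni--Wu give in the cited reference: apply the rational-surgery mapping cone, pass to the tower-only cone (the reduced pieces being $U$-torsion do not affect the bottom of $\mathcal{T}^+$), compare with the unknot case to produce the $d(L(p,q),i)$ baseline, and use monotonicity of the $V_k$ to isolate the correction term.

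One point of imprecision worth flagging: the two floor-function indices do not arise as the ``two extremal indices'' of a finite arithmetic progression. The progression in the rational mapping cone for $[i]$ is bi-infinite, and the relevant index is the single $k=\lfloor i/q\rfloor$ sitting in the ``central'' position; the $d$-invariant is governed by $\max\{V_k,H_k\}$ at that one index. The second floor function $\lfloor (p+q-1-i)/q\rfloor$ enters when one rewrites $H_{\lfloor i/q\rfloor}$ as a $V$-value via the conjugation symmetry $[i]\mapsto [p+q-1-i]$ on $\Spinc(L(p,q))$, not by picking out a far end of the cone. This does not break your argument, but the bookkeeping you identify as the ``main obstacle'' is slightly different from how you describe it.
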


Now we explain how the maps $v_k$ and $h_k$ together with the quotient complexes $A^+_k$ determine the Heegaard Floer homology of $p/q$ surgery along $K$. Since we are only interested in integer surgery in this paper, we write the theorem down in this case. The reader interested in the change to the case of fractional surgeries and a more detailed explanation of the notation should consult \cites{nu,mappingcone}. To this end, let $$\mathcal{A}^{\circ}_{i,p}(K):=\bigoplus_{n \in \Z} (n,A^{\circ}_{i+pn}), \hspace{.2in} \mathcal{B}^{\circ}:=\bigoplus_{n \in \Z} (n,B^{\circ}).$$ Then define a chain map $D_{i,p}^{\circ}: \mathcal{A}^{\circ}_{i,p} \to \mathcal{B}^{\circ}$ by $D_{i,p}^{\circ}(\{(k,a_k)\}_{k \in \Z})=\{(k,b_k)\}_{k \in \Z}$ where $b_k=v^{\circ}_{i+pk}(a_k)+h^{\circ}_{i+p(k-1)}(a_{k-1})$. Letting $\mathbb{X}^{\circ}_{i,p}$ denote the mapping cone of $D^{\circ}_{i,p}$, we have

\begin{theorem}\cite{nu}*{Theorem 1.1}
There is a relatively graded isomorphism of $\F[U]$-modules 

$$H_*(\mathbb{X}^{\circ}_{i,p}) \cong HF^{\circ}(S^3_p(K),i).$$

\end{theorem}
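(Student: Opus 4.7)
The plan is to bootstrap from the large-surgery formula using the Ozsv\'ath--Szab\'o surgery exact triangle. The first step is the base case: for $N \gg 0$, there is a chain homotopy equivalence $CF^{\circ}(S^3_N(K), [i]) \simeq A^{\circ}_i$ whenever $|i| \leq N/2$. This is established by choosing a Heegaard diagram for $S^3_N(K)$ in which one $\alpha$-curve winds $N$ times around the meridian of $K$; for $N$ large, this winding forces pseudo-holomorphic disks in a prescribed $\Spinc$-class to localize near the basepoints, and the surviving complex is identified with the sub-quotient $A^{\circ}_i$ of $CFK^{\infty}(K)$ after tracking the induced affine identification of $\Spinc(S^3_N(K))$ with $\Z/N\Z$.

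Second, I would invoke the surgery exact triangle to descend from large $N$ to arbitrary positive $p$. The triangle comes from a triad of knot fillings and yields long exact sequences of $\F[U]$-modules relating $HF^{\circ}$ of three related surgeries. Organizing the resulting pieces by $\Spinc$-structure on the large-surgery side produces an infinite collection of copies $\{A^{\circ}_{i+pn}\}_{n \in \Z}$ and $\{B^{\circ}\}_{n\in\Z}$ with connecting maps. The essential geometric input is to identify these connecting maps with the algebraic projection maps $v^{\circ}_k$ and $h^{\circ}_k$: they correspond to the two $\Spinc$-extensions of a given boundary $\Spinc$-structure across the surgery two-handle cobordism, and under the large-surgery identification they become, respectively, the projection to $B^{\circ} = C\{i \geq 0\}$ and the projection to $C\{j \geq k\}$ composed with the $U^k$-shift and the canonical equivalence $C\{j \geq 0\} \simeq C\{i \geq 0\}$.

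Third, I would assemble the iterated triangles into a single mapping cone via the standard algebraic principle that an exact triangle $A \to B \to C \to A[1]$ identifies $B \simeq \Cone(C[-1] \to A)$. Splicing the triangles with consistent $\Spinc$-bookkeeping and passing to a telescope over $n \in \Z$ yields the complex $\mathbb{X}^{\circ}_{i,p}$ with exactly the differential $D^{\circ}_{i,p}$ described in the text. The infinite direct sum is benign for the purpose of computing $HF^{\circ}$ because $v^{\circ}_k$ and $h^{\circ}_k$ are isomorphisms for $|k| \gg 0$, so almost all contributions to homology cancel in pairs, leaving a finite computation up to $U$-torsion; the resulting isomorphism is relatively graded by tracking the Maslov-grading shifts of the cobordism maps in the triangle.

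The principal obstacle is the second step: matching the holomorphic-triangle-counted connecting maps in the surgery triangle with the purely algebraic projections $v^{\circ}_k$ and $h^{\circ}_k$ defined from the filtration on $CFK^{\infty}(K)$. This requires a careful choice of triple Heegaard diagram adapted to $K$, an analysis of the two $\Spinc$-extensions across the surgery two-handle via the intersection form on the cobordism, and a localization argument in the spirit of the large-surgery proof to show that in the appropriate stabilized region the triangle count reduces to the desired projection. Once this identification is in hand, the relative grading statement follows from the known grading shifts of two-handle cobordism maps.
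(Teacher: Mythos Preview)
The paper does not give its own proof of this statement: it is quoted as background from \cite{nu}*{Theorem 1.1} (the Ozsv\'ath--Szab\'o integer surgery formula) and used as a black box. So there is nothing in the paper to compare your argument against.

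That said, your outline is broadly faithful to the strategy of the original Ozsv\'ath--Szab\'o proof: large-surgery identification $CF^{\circ}(S^3_N(K),[i]) \simeq A^{\circ}_i$, the surgery exact triangle to step down from $N$ to $p$, identification of the cobordism-induced maps with $v^{\circ}_k$ and $h^{\circ}_k$, and assembly into a mapping cone. One point where your sketch is a bit loose is the passage from the genuinely infinite complex $\mathbb{X}^{\circ}_{i,p}$ to something computable: in the source this is handled by an explicit truncation argument (replacing the infinite direct sums by finite ones indexed by $|n| \leq b$ for $b$ large and checking the truncated map is a quasi-isomorphism), not merely by observing that $v^{\circ}_k, h^{\circ}_k$ are eventually isomorphisms. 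Your phrase ``almost all contributions to homology cancel in pairs'' is the right intuition but would need to be made precise along those lines. The identification of the triangle maps with $v^{\circ}_k$ and $h^{\circ}_k$---which you correctly flag as the main technical obstacle---is indeed the heart of the matter and occupies most of the work in \cite{nu}.
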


Next, we introduce the $\nu^+$ invariant. as defined in \cite{fourgenus}*{Definition 2.1}.

\begin{definition} The invariant $\nu^+$ is defined as follows: $\nu^+:= \min \{k \in \Z \mid v_k:A^+_k\to \widehat{CF}(S^3), v_k^+(1)=1\},$
where $1 \in H_*(A^+_k)$ is a generator with lowest grading of the tower summand.
\end{definition}

\noindent Recall that $\nu^+(K) \leq g_s(K)$ \cite{fourgenus}*{Proposition 2.4}. With the mapping cone formula we can give an alternative definition of $\nu^+$. This definition is equivalen to the one just given since the integers $V_k$ determine the map $v_k^+$ on the non-torsion summand of $A_k^+$ \cite{cosmetic}.


\begin{lemma}{\label{nu+}} $\nu^+(K)=\min\{k \in \Z \mid V_k=0\}.$

\end{lemma}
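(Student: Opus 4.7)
The plan is to unpack both sides of the claimed equality and show that each equals the smallest $k$ for which the map $v_k^+$, restricted to the lowest-grading generator of the tower summand in $H_\ast(A_k^+)$, is an isomorphism onto the corresponding generator in $H_\ast(B^+) \cong HF^+(S^3)$. Once that reformulation is made, the two sides of the equation are matched by recalling how $V_k$ governs $v_k^+$.

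First, I would recall the structural fact about the tower summands: for every $k$, the homology $H_\ast(A_k^+)$ splits (non-canonically) as $\mathcal{T}^+ \oplus A_k^{red}$, and similarly $H_\ast(B^+) \cong \mathcal{T}^+$. The map $v_k^+$ is $U$-equivariant, and by the definition given in Subsection \ref{mappingcone}, its restriction to the tower summand of $H_\ast(A_k^+)$ is multiplication by $U^{V_k}$ into the tower of $H_\ast(B^+)$. In particular, if $1 \in H_\ast(A_k^+)$ denotes the generator of lowest grading in $\mathcal{T}^+$ and $1 \in H_\ast(B^+)$ likewise denotes the generator of lowest grading, then $v_k^+(1) = U^{V_k} \cdot 1$. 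Hence the equation $v_k^+(1) = 1$ in $H_\ast(B^+)$ is equivalent to $V_k = 0$.

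Taking minima over $k$, the definition of $\nu^+(K)$ from \cite{fourgenus}*{Definition 2.1} becomes
\[
\nu^+(K) = \min\{k \in \Z \mid v_k^+(1) = 1\} = \min\{k \in \Z \mid V_k = 0\},
\]
which is the claim. To confirm that the right-hand minimum is well-defined, I would cite the monotonicity inequality $V_{k+1} \leq V_k$ recorded above the statement, together with the fact that $v_k^+$ is a chain homotopy equivalence for all sufficiently large $k$ (so $V_k = 0$ eventually). This guarantees that the set $\{k : V_k = 0\}$ is a nonempty upward-closed subset of $\Z$ containing all large enough integers, so its minimum exists.

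The only subtlety to flag is the conventional identification between the target of $v_k^+$ written as $\widehat{CF}(S^3)$ in the statement of the lemma and the complex $B^+$ appearing in the mapping-cone setup; these are used interchangeably in the literature, with the understanding that on the tower summand the map $v_k^+$ is multiplication by $U^{V_k}$ and that $v_k^+(1) = 1$ records precisely the surjectivity of this map onto the bottom of the tower. Since the argument is essentially a definition-chase, I do not expect any genuine obstacle; the only care required is to ensure the normalization of the generator $1 \in H_\ast(A_k^+)$ (lowest-grading element of the tower) matches the normalization used to define $V_k$, which it does by construction.
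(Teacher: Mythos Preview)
Your argument is correct and matches the paper's approach exactly: the paper does not give a formal proof of this lemma but simply remarks, just before stating it, that the equivalence holds ``since the integers $V_k$ determine the map $v_k^+$ on the non-torsion summand of $A_k^+$''. Your write-up is a careful unpacking of precisely that sentence, with the additional (and welcome) verification that the minimum on the right-hand side exists.
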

\noindent We will also make use of the hat version $\nu$ as defined in \cite{nu}*{Definition 9.1}

\begin{definition}\label{nudef}
For a knot $K \subset S^3$, define $\nu(K):= \min\{ s \mid (\hat{v}_s)_* \neq0\}$. 
\end{definition}
\noindent Then genus detection of knot Floer homology implies that $$g(K)=\max\{ \nu(K),\{s \mid \dim \widehat{A}_{s-1} >1\}\}.$$

\section{Reducible Surgeries on Slice Knots}

In this section we prove Theorems \ref{thm:redslice} and \ref{multipleslopes}.

\subsection{The $d$-invariants of Reducible Manifolds}

Theorem \ref{thm:redslice} follows from the more general Theorem \ref{thm:sumdinvariants} below, which deals with $d$-invariants of knots which admit a reducible surgery. 

\begin{theorem}\label{V_i's}
Suppose $K$ is a knot in $S^3$ such that $pq$ is a reducing slope with $(p,q)=1$, and $S^3_{pq}(K) \cong Y_p \# Y_q$, where $Y_p$ and $Y_q$ are 3-manifolds with $H^2(Y_p;\Z)\cong \Z/p\Z$ and $H^2(Y_q;\Z)\cong \Z/q\Z$. Then for each $0 \leq \ell \leq \frac{(p-1)(q-1)}{2}$, the $V_i$'s satisfy
\begin{equation}
\sum_{i=0}^{q-1}\left(V_{\ell+i}-V_{\alpha(\ell+i+p)}\right)=\frac{(p-1)(q-1)}{2}-\ell,
\label{eq:sumdinvariants}
\end{equation}
Where $\alpha(j)=\min\{j,pq-j\}$.
\label{thm:sumdinvariants}
\end{theorem}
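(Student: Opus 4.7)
The plan is to combine additivity of $d$-invariants under connected sum with the Ni--Wu surgery formula (Theorem \ref{Ni-Wu}) and an averaging argument that successively eliminates the $Y_p$- and $Y_q$-contributions to the $d$-invariant. The starting observation is that, under the identification $\sigma\colon\Z/pq\Z \to \Spinc(S^3_{pq}(K))$ and the product decomposition $\Spinc(Y_p \# Y_q) = \Spinc(Y_p)\times\Spinc(Y_q)$, the shift $[i]\mapsto[i+p]$ adds $p[K']$ to the underlying $H^2$-class. This vanishes under projection to $H^2(Y_p;\Z)\cong\Z/p\Z$, so additivity (Equation \ref{eqn: additivity}) collapses to
\begin{equation*}
d(S^3_{pq}(K),[i]) - d(S^3_{pq}(K),[i+p]) = d(Y_q,\pi_{Y_q}([i])) - d(Y_q,\pi_{Y_q}([i+p])).
\end{equation*}

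Next, I would sum this identity over $q$ consecutive labels $i=\ell,\ldots,\ell+q-1$. Because $(p,q)=1$, the Chinese remainder isomorphism $\Z/pq\Z\cong\Z/p\Z\oplus\Z/q\Z$ identifies $\pi_{Y_q}$ with reduction modulo $q$ up to a unit reparametrization; consequently, as $i$ sweeps through $q$ consecutive integers both $\{i\bmod q\}$ and $\{(i+p)\bmod q\}$ exhaust $\Spinc(Y_q)$ exactly once, and the right-hand side rearranges to $\sum_{t\in\Spinc(Y_q)}d(Y_q,t) - \sum_{t\in\Spinc(Y_q)}d(Y_q,t) = 0$. Inserting the Ni--Wu formula $d(S^3_{pq}(K),[i]) = d(L(pq,1),[i]) - 2V_{\alpha(i)}$ (the monotonicity $V_{k+1}\leq V_k$ converts the maximum appearing in Theorem \ref{Ni-Wu} into $V_{\alpha(i)}$) then turns this vanishing into
\begin{equation*}
2\sum_{i=\ell}^{\ell+q-1}\bigl[V_{\alpha(i)} - V_{\alpha(i+p)}\bigr] = \sum_{i=\ell}^{\ell+q-1}\bigl[d(L(pq,1),[i]) - d(L(pq,1),[i+p])\bigr].
\end{equation*}

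To finish, I would evaluate the right-hand side explicitly. The closed form $d(L(pq,1),[i]) = (2i-pq)^2/(4pq) - 1/4$ (the base case of Equation \ref{eqn: dinvts}) gives $d(L(pq,1),[i]) - d(L(pq,1),[i+p]) = (pq-p-2i)/q$, and summing over $i=\ell,\ldots,\ell+q-1$ telescopes to $(p-1)(q-1)-2\ell$. Dividing by $2$ yields the claimed identity, once one notes that the hypothesis $\ell\leq(p-1)(q-1)/2$ forces $\ell+i\leq pq/2$ for $0\leq i\leq q-1$, so $\alpha(\ell+i)=\ell+i$ and $V_{\alpha(\ell+i)}$ may be rewritten as $V_{\ell+i}$.

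The hardest part will be the $\Spinc$ bookkeeping in the first two steps: one must confirm carefully that the identification $\sigma$ is compatible with the Chinese remainder decomposition of $H_1$ in precisely the way needed for $[i]\mapsto[i+p]$ to annihilate the $Y_p$-projection while cycling the $Y_q$-projection through every residue, and one should verify that the indices $i$ and $i+p$ stay in $[0,pq-1]$ throughout, so the Ni--Wu formula applies without any wraparound subtleties. Everything remaining reduces to a routine arithmetic calculation.
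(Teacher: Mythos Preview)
Your proposal is correct and follows essentially the same route as the paper's own proof: shift $\Spinc$ labels by $p$ to kill the $Y_p$-contribution via additivity, sum over $q$ consecutive labels so that the $Y_q$-contributions cancel in pairs, and then reduce via Ni--Wu to an elementary lens-space $d$-invariant computation. The only cosmetic differences are that the paper states the lens-space difference directly as $\tfrac{2(\ell+i)+p(1-q)}{q}$ rather than deriving it from the closed form $d(L(pq,1),[i])=\tfrac{(2i-pq)^2}{4pq}-\tfrac14$, and the paper does not explicitly check that $\alpha(\ell+i)=\ell+i$ on the claimed range (your observation here is a small improvement in rigor, and note it uses $q\le p+1$).
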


\begin{proof}
Suppose $K$ is a knot in $S^3$ with $S^3_{pq}(K) \cong Y_p \# Y_q$. We will write $\pi_p$ for the projection map $\pi_{Y_p}: \Spinc(S^3_{pq}(K)) \to \Spinc(Y_p)$ and similarly for $\pi_q$. As $|H^2(Y_p;\Z)|=p$ we have $\pi_{p}([p+i])=\pi_{p}([i])$ for $[i], [p+i] \in \Spinc(S^3_{pq}(K))$. Then by additivity of the $d$-invariants, for any $\ell \in \mathbb{Z}$ we have:
\begin{equation}\label{eqn: difference}
d(S^3_{pq}(K),[p+i+\ell])-d(S^3_{pq}(K),[i+\ell])=d(Y_q,\pi_q[p+i+\ell])-d(Y_q,\pi_q[i+\ell]).\end{equation}
By Theorem \ref{Ni-Wu} and Equation \ref{eqn: dinvts}, we see that the left hand side difference equals $$\frac{2(\ell+i)+p(1-q)}{q}+2V_{\ell+i}-2V_{\alpha(\ell+i+p)}.$$ Summing from $i=0$ to $i=q-1$ we see:
\begin{align}
\sum_{i=0}^{q-1} d(S^3_{pq}(K),[\ell+i+p])-d(S^3_{pq}(K),[\ell+i]) &=
\sum_{i=0}^{q-1}\left(\frac{2(\ell+i)+p(1-q)}{q}\right) \nonumber\\
             & \,\,\,\,\,\,\,\,\, +2\sum_{i=0}^{q-1} \left(V_{\ell+i}-V_{\alpha(\ell+i+p)}\right).
\label{eqn:turbosum}
\end{align}
On the other hand, the left-hand side of Equation \ref{eqn:turbosum} is equal to
$$\sum_{i=0}^{q-1} d(Y_q,\pi_q[\ell+i+p])-d(Y_q,\pi_q[\ell+i]).$$
by equation \ref{eqn: difference}. Additionally, this sum is zero because the projection $\pi_q$ induces bijections between $\Spinc(Y_q)$ and both sets $\{\ell,\dots,\ell+q-1\}$, $\{p+\ell,\dots,p+\ell+q-1\}$. Rearranging the sum in Equation \ref{eqn:turbosum}, we see that
$$\sum_{i=0}^{q-1}\left(V_{\ell+i}-V_{\alpha(\ell+i+p)}\right)=\frac{(p-1)(q-1)}{2}-\ell.$$
\end{proof}

\begin{proof}[Proof of Theorem \ref{thm:redslice}]

We first show that Theorem \ref{V_i's} implies $Y$ is an integer homology sphere. Since $V_i(K)=0$ for all $i$, Equation \ref{eq:sumdinvariants} with $\ell=0$ implies $(p-1)(q-1)=0$, so either $p=1$ or $q=1$. If $p=1$, the positive solution of the two summands conjecture in the case where $V_i(K)=0$ for all $i\geq 0$ (\cite{threesummands}) implies that $q$ was not a reducing slope since $Y$ is irreducible. Therefore, under the assumption that we have a reducing slope it follows that $q=1$ and the reducible surgery is $S^3_{p}(K) \cong L(p,a) \# Y$ where $Y$ is an irreducible homology sphere.

To finish off the proof of Theorem \ref{thm:redslice}, it remains to show that $a=1$ and $d(Y)=0$. Using Equation \ref{eqn:LLdinvt}, we have

$$d(L(p,a),[i])-d(L(p,a),[a+i])=\frac{p-2i-1}{p}.$$ There are two cases to consider since $\pi_L$ preserves self-conjugacy of $\text{Spin}^{c}$ structures, and so either $\pi_L([0])$ is $\frac{a-1}{p}$ or $\frac{p+a-1}{p}$. First, suppose that $\pi_L([0])=\frac{a-1}{p}$ and $[s] \in \Spinc(S^3_p(K))$ satisfies $\pi_L([s])=\pi_L([0])+a$. Then

\begin{align*}
\frac{p-a}{p}=d(L(p,1),[0])-d(L(p,1),[s])-2(V_0-V_{\alpha(s+p)})=\frac{-s^2}{p}+s.
\end{align*}
This implies that $s(p-s)=p-a$. However, $s(p-s)\geq p-1$, and so we must have $a=1$.

Next, suppose that $\pi_L[0]=\frac{p+a-1}{2}$ and $\pi_L([s])=\pi_L([0])+a$. The same computation as above gives

\begin{align*}
\frac{-a}{p}=d(L(p,a),0)-d(L(p,a),[s])-2(V_0-V_{\alpha(s+p)})=\frac{-s^2}{p}+s
\end{align*}

\noindent This implies that $s(p-s)=-a$ a contradiction. The fact that $d(Y)=0$ follows immediately from additivity of the $d$-invariants.

\end{proof}

\begin{proof}[Proof of Corollary \ref{slicegenus}] Suppose $K$ is a knot in $S^3$ which admits a reducing slope of the form $r=pq$ with $S^3_{pq}(K) \cong Y_p\#Y_q$. Then equation \ref{eq:sumdinvariants} implies that $V_i \neq 0$ for $i < \frac{(p-1)(q-1)}{2}$. Therefore, $\nu^+(K) \geq \frac{(p-1)(q-1)}{2}$ by Lemma \ref{nu+}. Since $\nu^+$ is a lower bound for the slice genus of a knot, the result follows.

\end{proof}

\subsection{Multiple Reducing Slopes on Slice Knots}


In this section we use the mapping cone formula for $\HFhat$ and the following lemma to prove Theorem \ref{multipleslopes}. The lemma below follows immediately from the Kunneth theorem for $\HFhat$ and the fact that lens spaces are $L$-spaces (for the proof and for the analogous statement for $HF^+$, see \cite{HLZ15}*{Lemma 2.6}.)

\begin{lemma}{\label{sym}}

Suppose $Y$ is a three manifold and $Y \cong L(p,a) \# Y_2$ with $|H^2(Y_2,\Z)|=q$. Then for any $\alpha \in H^2(Y)$ and $\mathfrak{s} \in \Spinc(Y)$ we have $\dim\HFhat(Y,\mathfrak{s}+q\alpha)=\dim\HFhat(Y,\mathfrak{s})$.

\end{lemma}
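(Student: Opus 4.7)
The plan is to leverage the two ingredients flagged in the statement: the Künneth formula for $\HFhat$ of a connected sum and the fact that lens spaces are L-spaces. First I would use the identifications $\Spinc(L(p,a) \# Y_2) = \Spinc(L(p,a)) \times \Spinc(Y_2)$ and $H^2(Y;\Z) = H^2(L(p,a);\Z) \oplus H^2(Y_2;\Z)$ from Section 2.1, writing $\mathfrak{s} = (\mathfrak{s}_1,\mathfrak{s}_2)$ and $\alpha = (\alpha_1,\alpha_2)$. Since $|H^2(Y_2;\Z)| = q$ by hypothesis, we have $q\alpha_2 = 0$ in $H^2(Y_2;\Z)$, so
\[
\mathfrak{s} + q\alpha \;=\; (\mathfrak{s}_1 + q\alpha_1,\; \mathfrak{s}_2).
\]

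Next I would apply the Künneth formula for $\HFhat$ of a connected sum, which gives
\[
\HFhat(Y,\mathfrak{t}_1 \# \mathfrak{t}_2) \;\cong\; \HFhat(L(p,a),\mathfrak{t}_1) \tensor \HFhat(Y_2,\mathfrak{t}_2)
\]
as $\F$-vector spaces for each pair $(\mathfrak{t}_1,\mathfrak{t}_2)$. Because lens spaces are L-spaces, $\dim \HFhat(L(p,a),\mathfrak{t}_1) = 1$ for every $\mathfrak{t}_1 \in \Spinc(L(p,a))$. Combining these two facts yields
\[
\dim \HFhat(Y, \mathfrak{s} + q\alpha) \;=\; \dim \HFhat(Y_2, \mathfrak{s}_2) \;=\; \dim \HFhat(Y, \mathfrak{s}),
\]
which is the asserted equality.

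There is essentially no obstacle here: translating the $\Spinc$ structure by $q\alpha$ is invisible on the $Y_2$ factor, since $q$ annihilates $H^2(Y_2;\Z)$, and merely permutes $\Spinc$ structures on the lens space factor, each of which contributes a one-dimensional summand to $\HFhat$. The proof of the analogous $HF^+$ statement in \cite{HLZ15}*{Lemma 2.6} can be transcribed essentially verbatim, with $HF^+$ replaced by $\HFhat$ throughout; the only point deserving attention is to check that the gradings induced by the Künneth isomorphism are irrelevant, which is automatic since we are computing total dimensions.
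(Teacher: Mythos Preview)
Your proof is correct and matches the paper's approach exactly: the paper simply states that the lemma follows immediately from the K\"unneth theorem for $\HFhat$ and the fact that lens spaces are L-spaces, referring to \cite{HLZ15}*{Lemma 2.6} for the analogous $HF^+$ argument. Your write-up fills in precisely those details.
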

\begin{proof}[Proof of Theorem \ref{multipleslopes}]

Suppose $S^3_r(K) \cong L(r,a) \#Y$ and $S^3_{r+1}(K) \cong L(r+1,b)\#Z$ where $Y$ and $Z$ are both integer homology spheres, $(r,a)=1$ and $(r+1,b)=1$, and $r \geq g(K)$. We assume the two reducing slopes are consecutive positive integers $r$ and $r+1$ by mirroring the knot if necessary. Since both surgeries split off integer homology three spheres, we see by Lemma \ref{sym} that

$$\dim(\HFhat(S^3_{r}(K),i))=\dim(\HFhat(S^3_{r}(K),j))$$
for any two $\Spinc$ structures $i$ and $j$ on $S^3_r(K)$. Similarly,

$$\dim(\HFhat(S^3_{r+1}(K),i))=\dim(\HFhat(S^3_{r+1}(K),j)).$$

Note that if $r$ is odd, we choose representatives $[i]$ of $\Spinc$ satisfying $-\lfloor r/2 \rfloor \leq i \leq \lfloor r/2\rfloor$. If $r$ is even, choose representatives with $-\lfloor r/2 \rfloor< i \leq \lfloor r/2 \rfloor.$
By the assumption $r \geq g(K)$ we have $r=g+i$ for $0 \leq i \leq g-1$. In this case, the mapping cone formula implies that $\HFhat(S^3_r(K),k) \cong H_*(\widehat{A}_k)$ for $k=-i,-i+1,\dots,0,\dots,i-1,i$. For all other $k$ between $-\lfloor r/2\rfloor$ and $\lfloor r/2 \rfloor$ we have

$$\HFhat(S^3_r(K),k) \cong H_*(\mathrm{Cone}(\widehat{A}_{k-i-g} \oplus \widehat{A}_{k} \to \F)).$$ 
Now, consider the mapping cone for $r+1$ surgery. Since $r+1=g+i+1$, $\HFhat(S^3_{r+1}(K),k) \cong H_*(\widehat{A}_k)$ for $k=-i-1,-i,-i+1,\cdots,i-1,i,i+1$ and for all other $k$ between $-\lfloor (r+1)/2\rfloor$ and $\lfloor (r+1)/2\rfloor$ we have

$$\HFhat(S^3_{r+1}(K),k) \cong H_*(\Cone(\widehat{A}_{k-i-1-g} \oplus \widehat{A}_k\to \F)).$$

Let $n_k=\dim(H_*(\widehat{A}_k))$. Then in summary, from $r$ surgery reducing we have
$$n_0=n_1=n_2=\cdots =n_i=n_{i+1}+n_{1-g}+1-2\rank(h_{1-g} \oplus v_{i+1}).$$
However, $r+1$ surgery reducible implies that $n_0=n_{i+1}$, and so

\begin{equation}\label{eqn: multipleslopes}
n_{i+1}=n_{i+1}+n_{1-g}+1-2\rank(h_{1-g} \oplus v_{i+1}).\end{equation}
Since $\rank(h_{1-g}\oplus v_{i+1})=0$ or $1$, we either have $n_{1-g}=-1$ or $n_{1-g}=1$ by Equation \ref{eqn: multipleslopes}. The former case is impossible, so $\rank(h_{1-g}\oplus v_{i+1})=1$ and $H_*(A_{g-1})$ is one dimensional. This contradicts the genus detection of knot Floer homology. In particular, recall from \cite{nu} that the map $\hat{v}_{g-1}$ is not an isomorphism. So either $\nu(K)=g$ or $\dim(H_*(A_{g-1}))>1$. However, we found that $n_{g-1}=1$ and the assumption on $\nu(K)$ implies that the map $(v_{g-1})_*$ is surjective, a contradiction. 
\end{proof}


\begin{proof}[Proof of Corollary \ref{slicemultipleslopes}]
This follows immediately from Theorem \ref{thm:redslice} and Theorem \ref{multipleslopes}.

\end{proof}

\section{Reducible Surgeries on L-space Knots}

In this section, we will show that if an $L$-space knot $K$ admits a reducible surgery, then its thickness is bounded in terms of the reducing slope parameters.

\subsection{Thickness Bounds}

We first show that such a reducible surgery cannot admit integer homology sphere summands, and then collect Lemmas useful for the upcoming subsections.

\begin{proposition}
Suppose $K$ is an L-space knot of genus $g$. If $K$ admits a reducible surgery of the form $S^3_{2g-1}(K) \cong Y_p \# Y_q$ with $p>q$ relatively-prime, then $p,q > 1$.
\label{prop:GAprop}
\end{proposition}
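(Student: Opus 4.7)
The plan is to rule out the case $q=1$ by combining the fact that L-space knots are fibered with the bound of Theorem \ref{fiberedHprime}. Since $p > q \geq 1$, the only way to fail the conclusion $p, q > 1$ is to have $q = 1$, in which case $p = 2g-1$ and $Y_q$ is an integer homology sphere (as $|H^2(Y_q;\Z)| = q = 1$). So I aim for a contradiction under the assumption that $S^3_{2g-1}(K) \cong Y_p \# Y$ with $Y$ a homology sphere.

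First, I would invoke Ni's theorem that L-space knots in $S^3$ are fibered. This is the key structural input that lets Theorem \ref{fiberedHprime} apply. With $K$ fibered and the reducing slope $r = 2g-1$ splitting off a homology sphere summand, Theorem \ref{fiberedHprime} immediately yields the inequality
\[
2g-1 \;\leq\; g,
\]
which forces $g \leq 1$.

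Next, I would dispose of the remaining small genus cases. If $g = 0$, then $K$ is the unknot, which has no reducible surgeries (every surgery is a lens space). If $g = 1$, then $K$ is an L-space knot of genus one, hence the right-handed trefoil, and the putative reducing slope is $r = 2g-1 = 1$; but Gordon--Luecke show that any reducing slope satisfies $|r| > 1$, so this case is excluded as well. Either way we reach a contradiction, so $q > 1$ and therefore $p > q > 1$ as claimed.

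I expect no real obstacle here: essentially the only nontrivial input is recalling that L-space knots are fibered so that Theorem \ref{fiberedHprime} applies. The proof is really just chaining together Ni's fiberedness result, Gonz\'alez-Acu\~na--Short's Proposition 1.4, and the Gordon--Luecke bound on reducing slopes, exactly as hinted at in the acknowledgement attributing this observation to Tye Lidman.
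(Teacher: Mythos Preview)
Your proposal is correct and follows essentially the same route as the paper: invoke fiberedness of L-space knots (Ni, Ghiggini) and apply Theorem~\ref{fiberedHprime} to force $2g-1\le g$, then dispose of $g\le 1$ (the paper compresses this last step into the phrase ``would fail to be reducible''). The one small omission is that Theorem~\ref{fiberedHprime} as stated requires the complementary summand to actually be a lens space $L(2g-1,a)$, not merely to satisfy $|H_1|=2g-1$; the paper handles this by arguing directly about the Gordon--Luecke lens space summand, and you should likewise cite Gordon--Luecke to justify that $Y_p$ is a lens space before invoking the theorem.
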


\begin{proof}
Recall that L-space knots are fibered due to \cites{Ghi08, Ni07}. We may use Theorem \ref{fiberedHprime} to see that the lens space summand of $S^3_{2g-1}(K)$ cannot be $L(2g-1,a)$, as otherwise $S^3_{2g-1}(K)$ would fail to be reducible. Thus, we must have $p,q>1$.
\end{proof}

When $p,q>1$, the techniques in the proof of Theorem \ref{thm:sumdinvariants} are available to relate multiple differences of $d$-invariants. We divide the proof of Theorem \ref{thm:lspacethickbound} into two lemmas, since only the lower bound on knot thickness is necessary to prove Theorem \ref{thm:thinCC}.

\begin{lemma}
Suppose $K$ is an L-space knot of genus $g$, also admitting a reducible surgery of the form $S^3_{2g-1}(K) \cong Y_p \# Y_q$ with $p>q>1$ relatively prime, and let $m = \tfrac{p-q}{2}$.
\begin{itemize}
\item If $m=1$, then $q-1 \leq th(K)$.
\item If $1<m<q+2$, then $q+m-2 \leq th(K)$.
\item If $m \geq q+2$, then $2(q-1) \leq th(K)$.
\end{itemize}
\label{lem:lspacethicklbound}
\end{lemma}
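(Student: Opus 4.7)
Let $\Delta_K(T) = \sum_{i=0}^{2k}(-1)^i T^{n_i}$ denote the symmetrized Alexander polynomial of the L-space knot $K$, and write $d_i := n_{i-1} - n_i$ for the consecutive-exponent gaps. The plan is to use Theorem \ref{thm:sumdinvariants} to force $\max_i d_i$ to meet an appropriate lower bound in each of the three cases, then convert this into a thickness bound via the staircase description of $\widehat{\textit{HFK}}(K)$ for L-space knots. A direct computation along the staircase gives the absolute $\delta$-grading jumps $\delta(x_{2l+1}) - \delta(x_{2l}) = d_{2l+1} - 1$ and $\delta(x_{2l+2}) - \delta(x_{2l+1}) = 1 - d_{2l+2}$; since the range of a walk is at least its largest single step, this yields $th(K) \geq \max_i d_i - 1$. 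It therefore suffices to prove $\max_i d_i \geq q$, $q+m-1$, and $2q-1$ in the three cases.

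To extract information about the $d_i$'s, I would apply Theorem \ref{thm:sumdinvariants} at many values of $\ell$. Proposition \ref{prop:GAprop} and coprimality give $p, q \geq 3$, so at $\ell = 0$ we have $\alpha(i+p) = i+p$ throughout $i\in[0,q-1]$ and
\begin{equation*}
\sum_{i=0}^{q-1}(V_i - V_{i+p}) = \frac{(p-1)(q-1)}{2}.
\end{equation*}
Writing $b_s := V_{s-1} - V_s \in \{0,1\}$ for the L-space knot jump indicator, which equals the partial sum $\sum_{t\geq s} a_t$ of Alexander polynomial coefficients, the positions where $b_s = 1$ form alternating 1-blocks of lengths $d_1, d_3, \ldots$ separated by 0-blocks of lengths $d_2, d_4, \ldots$ (in decreasing $s$). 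Reordering the $\ell = 0$ identity gives a weighted count $\sum_{k=0}^{p+q-2} w(k)\, b_{k+1} = (p-1)(q-1)/2$ with $w(k) = \min\{k+1, q, p+q-1-k\}$. Taking differences of the theorem's identities at consecutive $\ell$ telescopes these into local constraints of the form $\#\{s\in I_\ell^L : b_s = 1\} - \#\{s\in I_\ell^R : b_s = 1\} = 1$, with careful tracking of when $\ell + i + p$ crosses $pq/2$ and the reflection in $\alpha$ kicks in.

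Finally, I would argue by contradiction in each case. Assuming $\max_i d_i < D$ (with $D = q, q+m-1, 2q-1$) forces every 1-block and 0-block of $b$ to be short, capping the available $b_s = 1$ positions in the windows of the telescoped identities. For $m = 1$, the near-coincidence $p = q + 2$ makes the left and right windows in the identity nearly disjoint, and a short pigeonhole on the $\ell = 0$ equation alone suffices. For $1 < m < q+2$, the extra spacing $m - 1$ adjusts this pigeonhole to force $\max_i d_i \geq q + m - 1$. For $m \geq q+2$, one must combine identities at $\ell$-values on both sides of the reflection locus of $\alpha$ and exploit $V_{-i} = V_i + i$ to effectively double the jump-count constraint, yielding $\max_i d_i \geq 2q - 1$. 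The main obstacle is this last case, where extracting the factor of $2$ requires delicately pairing the $\ell = 0$ identity with one on the reflected side of the surgery's $\mathrm{Spin}^c$ decomposition; the first two cases follow more directly from the $\ell = 0$ identity and a short block-length pigeonhole.
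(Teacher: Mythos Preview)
Your reduction from thickness to the maximal Alexander--polynomial gap $\max_i d_i$ via the staircase $\delta$--gradings is correct and is exactly what the paper does, phrased there as ``$n$ consecutively equal $V_i$'s give $th(K)\ge n-2$''. The gap is in how you extract the long run of equal $V_i$'s from Theorem~\ref{thm:sumdinvariants}.

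You propose to work near $\ell=0$ and with the telescoped identities obtained by differencing consecutive $\ell$'s, which (in the unreflected regime) read $c_\ell-c_{\ell+p}=1$ where $c_\ell$ counts the $1$'s of $b$ in the window $[\ell+1,\ell+q]$. But that telescoping is only valid when no term $\ell+i+p$ crosses $g-1$ and triggers the reflection in $\alpha$; concretely one needs $\ell\le g-p-q-1$. For $m=1$ and $q=3$ (so $p=5$, $g=8$) there is \emph{no} such $\ell$, and for larger $q$ the unreflected range is too short to iterate. The $\ell=0$ identity alone also does not force $\max_i d_i\ge q$: with $p=q+2$ the weighted count $\sum w(s)b_s=\tfrac{(q-1)(q+1)}{2}$ is easily achievable with all block lengths $\le q-1$, so no pigeonhole contradiction arises. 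Thus even your ``easy'' case $m=1$ is not actually established.

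The paper's proof does the opposite of avoiding the reflection: it chooses $\ell=k:=\tfrac{(p-1)(q-1)}{2}-1$ precisely so that $\alpha$ is active on every term. Then the reflected indices $\alpha(k+i+p)=g-m-i$ fold back onto the unreflected ones $k+i$, and all but two terms of the sum cancel in pairs around $i=\tfrac{q+1}{2}$, leaving
\[
(V_k-V_{g-m})+(V_{k+1}-V_{g-m-1})=1.
\]
Monotonicity of the $V_i$ then forces $V_{k+1}=\cdots=V_{k+q}$ directly, the ``central $q$--block''. The three cases are finished by one more application at $\ell=k-q$ together with $V_{g-1}=V_{g-2}=1$. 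In short, the reflection $\alpha$ is the \emph{mechanism} producing the cancellation and hence the long block; your outline treats it as a nuisance to be tracked rather than the engine of the argument, and that is why the proof does not close.
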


\begin{proof} We aim to use Theorem \ref{thm:sumdinvariants} to show that $K$ admits a number of consecutive $V_i$'s that are equal, and involve grading information by relating this to $\textit{CFK}^{\infty}(K)$. To see this, recall that the $V_i$'s determine $\textit{CFK}^{\infty}(K)$ when $K$ is an L-space knot. Having $n$ consecutively equal $V_i$'s manifests in the full knot Floer complex as a pair of generators $a$ and $b$ with a length $n-1$ vertical differential between them. Then we have
\[
A(a)-A(b) = n-1 \,\,\,\, \text{and} \,\,\,\, M(a)-M(b) = 2(n-1)-1.
\]
Thus $\delta(b)-\delta(a) = n-2$, showing that $th(K) \geq n-2$.

To search for a string of consecutively equal $V_i$'s, let $m=\tfrac{p-q}{2}$ and $k = \tfrac{(p-1)(q-1)}{2}-1 = \tfrac{pq-p-q-1}{2}$. Since $2g-1=pq$, notice that $k = g-(q+m+1)$. Using Equation \ref{eq:sumdinvariants} with this choice of $k$ in place of $\ell$ yields
\begin{equation}
\sum_{i=0}^{q-1} (V_{k+i}-V_{\alpha(k+i+p)}) = 1.
\label{eqn:lspacethick1}
\end{equation}
To better understand the various $\alpha(k+i+p)$, observe that for $0 \leq i \leq q-1$ we have both
\begin{align*}
k+i &\leq k+q-1 \\
&= g-m-2\\
&< g-1,
\end{align*}
and also
\begin{align*}
k+i+p &\geq k+p \\
&\geq g-(q+m+1)+p\\
&\geq g+m-1\\
&> g.
\end{align*}
Together these imply that Equation \ref{eqn:lspacethick1} becomes
\begin{equation}
\sum_{i=0}^{q-1} \left(V_{k+i}-V_{g-m-i} \right) = 1,
\label{eq:sumtermcancel}
\end{equation}
where $\alpha(k+i+p)=g-(m+i)$. We claim that all but 2 terms of this sum cancel. To see this, notice that $j=\tfrac{q+1}{2}$ satisfies $k+j=\alpha(k+j+p)$. All terms after $(V_{k+j}-V_{\alpha(k+j+p}))$ in Equation \ref{eq:sumtermcancel} then cancel with their corresponding mirror earlier in the sum. This leaves just the first $q-(1+2((q-1)-(\tfrac{q+1}{2}))) = 2$ terms remaining. Thus, 
\[
(V_k-V_{g-m})+(V_{k+1}-V_{g-(m+1)}) = 1.
\]
We must have $V_{k+1} = V_{g-(m+1)}=V_{k+q}$ since the second difference is nested within the first, and the $V_i's$ are non-decreasing. This yields $q$ consecutively equal $V_i$'s. 

Let us call a string of $n$ consecutively equal $V_i$'s an $n$-\textit{block}. Additionally, we will suggestively call the $q$-block containing the $V_i$'s between $V_{k+1}$ and $V_{k+q}$ the \textit{central} $q$-block. Determining which types of $n$-blocks may appear above and below the central $q$-block will allow us to find strings of consecutively equal $V_i$'s for the cases depending on $m$. For example, observe that $k+q = g-2$ if $m=1$. Both $V_{g-1}=1$ and $V_{g-2}=1$ since $K$ is an L-space knot \cite{heddengeography}*{Corollary 9}, and so $V_{k+1}=V_{g-1}$ provides a string of $q+1$ consecutively equal $V_i$'s.

Suppose $m \geq q+2$. Consider the following equation obtained from Equation \ref{eq:sumdinvariants} using $\ell=k-q$,
\begin{equation}
\sum_{i=0}^{q-1}(V_{k-q+i}-V_{\alpha(k-q+i+p)})=q.
\label{eq:underqblock}
\end{equation}
We see that $\alpha(k-q+1+p) = k+2q$ when $m \geq q+2$, and so Equation \ref{eq:underqblock} implies that $(V_{k-q+1}-V_{k+2q}) = 1$. The other $q-1$ differences are nested within this one, and so we must have $q$-blocks above and below the central one. Precisely two of these three must have their contained $V_i$'s agree, and so $K$ admits $2q$ consecutively equal $V_i$'s.

Otherwise we have $1 < m < q+2$. Here we see that $j = q+1-m$ satisfies that $(V_{k-q+j}-V_{g-1})$ is a term of Equation \ref{eq:underqblock}. This implies that $V_{k-m+1} = V_{k-q+j} = V_{g-1}+1 = 2$. Either $1=V_{g-1} = V_{k+1}$ or $2=V_{k+q} = V_{k-m+1}$. In both cases, $K$ admits $q+m$ consecutively equal $V_i$'s.
\end{proof}

Before turning to an upper bound on thickness, the recently obtained lower bound enables us to prove Corollary \ref{cor:smallthick}.

\textit{Proof of Corollary \ref{cor:smallthick}}
As an L-space knot, the only reducing slope to consider is $2g(K)-1$ due to \cite{HLZ15}. Suppose $K$ admits a reducible surgery of the form $S^3_{2g-1}(K) \cong Y_p \# Y_q$, with $p>q$ relatively prime. Proposition \ref{prop:GAprop} forces $q > 1$, but Lemma \ref{lem:lspacethicklbound} implies $t-1 \leq q-1 \leq th(K)$, which is the desired contradiction.
\qed

The upper bound that we can determine is comparatively not as sharp.

\begin{lemma}
Suppose $K$ is an L-space knot of genus $g$, also admitting a reducible surgery of the form $S^3_{2g-1}(K) \cong Y_p \# Y_q$ with $p>q>1$ relatively prime, and let $m = \tfrac{p-q}{2}$. Then
\[
th(K) \leq q+m.
\]
\label{lem:lspacethickubound}
\end{lemma}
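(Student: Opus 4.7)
The plan is to combine the staircase description of $\textit{CFK}^{\infty}(K)$ for L-space knots with multiple applications of Theorem \ref{thm:sumdinvariants}, extending the technique used for the lower bound. For an L-space knot with Alexander polynomial $\Delta_K(t)=\sum_{i=0}^{2n}(-1)^{i}t^{s_{i}}$ (where $s_0>s_1>\cdots>s_{2n}$), the generator of $\widehat{\textit{HFK}}(K,s)$ at non-negative Alexander grading $s$ has Maslov grading $-2V_s$, so its $\delta$-grading is $s+2V_s$, and $th(K)$ equals the range of $s+2V_s$ over non-negative exponents. Writing $T_j=s_{2j}-s_{2j+1}$ and $U_j=s_{2j+1}-s_{2j+2}$, the $V_i$-sequence has an interior plateau of length $T_j+U_j$ for each pair $j$, and the $\delta$-sequence (starting at $\delta(s_0)=g$) picks up alternating increments $+T_j$ and $-U_j$.

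The first step is to show that no plateau of consecutively-equal $V_i$'s has length exceeding $q+m+2$. The strategy extends that of Lemma \ref{lem:lspacethicklbound}: if $V_a=V_{a+1}=\cdots=V_{a+L-1}$ is a plateau of length $L>q$, applying Theorem \ref{thm:sumdinvariants} at two consecutive shifts $\ell$ and $\ell+1$ within the plateau and subtracting the resulting identities telescopes to the constraint
\[
V_{\alpha(\ell+p+q)}-V_{\alpha(\ell+p)}=1.
\]
Iterating over all $L-q$ valid pairs of shifts forces a rigid drop pattern on the ``conjugate side'' (indices near $pq-\ell-p$). Using the slice-genus bound $V_i\leq\lceil(g-i)/2\rceil$ of Equation \ref{localh} together with the genus identity $g=(pq+1)/2$ and Proposition \ref{prop:GAprop} (so both $p,q>1$), a plateau of length greater than $q+m+2$ cannot be accommodated since the required conjugate drops exceed the available drop capacity $V_0$.

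Next, I would convert this plateau-length bound into the desired thickness bound, using the dictionary from Lemma \ref{lem:lspacethicklbound} and the symmetry of the L-space knot staircase. Since interior plateaus have length $T_j+U_j$, the previous step yields $T_j+U_j\leq q+m+2$ for each $j$, and hence each individual jump and descent contributes at most $q+m$ to the $\delta$-excursion. Combining this with the telescoping identity $\sum_j(T_j+U_j)=2g=pq+1$ and the reflection symmetry of the exponent sequence (so that the $\delta$-excursion must return to $g$ at $s_{2n}$) bounds the range of the partial-sum sequence $s+2V_s$ by $q+m$. The main obstacle is precisely this last conversion: thickness involves partial sums of the increments $\pm T_j,\pm U_j$, which can in principle exceed any single pair's contribution $T_j+U_j$, so the argument requires either a secondary application of Theorem \ref{thm:sumdinvariants} to rule out two consecutive near-maximal pairs, or an inductive control of the $\delta$-excursion using the staircase symmetry that is specific to the reducible-surgery setting.
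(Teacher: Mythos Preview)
Your proposal contains a gap you yourself name: the step converting a plateau-length bound into an upper bound on $th(K)$ is left undone. This is the crux, not a technicality. For L-space knots in general, bounding the maximal run of equal $V_i$'s does \emph{not} bound the thickness, because $th(K)$ measures the total excursion of the $\delta$-sequence and can accumulate drift over many steps; for instance $T_{3,7}$ has every $V$-plateau (for $i\ge 0$) of length $3$ yet $th(T_{3,7})=2>3-2$. So the inequality you would need, ``max plateau $\le N\Rightarrow th(K)\le N-2$'', is false without further structural input from the reducible-surgery hypothesis, and your suggested fixes (a ``secondary application'' of Theorem~\ref{thm:sumdinvariants} or unspecified ``inductive control'') are not worked out.

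There is also a concrete error in your dictionary: the formula $M=-2V_s$, hence $\delta=s+2V_s$, fails at the odd-indexed staircase generators $s_{2j+1}$. For $T_{3,4}$ the generator at $s_1=2$ has $M=-1$, whereas $-2V_2=-2$. Taking the range of $s+2V_s$ over \emph{all} nonnegative exponents therefore overcounts the thickness (for $T_{3,7}$ it returns $3$, not $2$). The identity holds only at the even-indexed exponents, so the link you set up between $V$-plateaus and $\delta$-excursion needs repair before it can be used.

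The paper's argument is organized differently and does not pass through a uniform plateau bound obtained from a ``drop-capacity $V_0$'' estimate or Equation~\ref{localh}. It uses the same telescoped identity you isolate (comparing Theorem~\ref{thm:sumdinvariants} at adjacent shifts) to prove a \emph{localization}: any plateau lying entirely below index $g-p$ has length at most $q$, so the longest plateau---already of length $\ge q+1$ by Lemma~\ref{lem:lspacethicklbound}---must meet the finite window $[g-p,\,g-1]$. The proof then analyzes that window directly, using the position of the central $q$-block at $[k+1,k+q]$ and the symmetric block structure forced by the nested differences in Theorem~\ref{thm:sumdinvariants}, to cap the maximal run there at $q+m+2$. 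This localization to a single window, rather than a global capacity count against $V_0$, is the structural difference between the two approaches.
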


\begin{proof}
As before, choose $k = \tfrac{(p-1)(q-1)}{2}-1$ so that Equation \ref{eq:sumdinvariants} yields
\[
\sum_{i=0}^{q-1}(V_{k+i}-V_{\alpha(k+i+p)})=1.
\]
Due to Lemma \ref{lem:lspacethicklbound}, we see that $K$ admits a central $q$-block of $q$ consecutively equal $V_i$'s between $V_{k+1}=V_{k+q}$. The nesting behavior of the differences of $d$-invariants observed in the proof of Lemma \ref{lem:lspacethicklbound} ends at $(V_{g-p}-V_{g-1})$, and beyond this point we show that only small strings of consecutively equal $V_i$'s can occur.

Suppose for the sake of contradiction that $j < j+q+1 < g-p$ satisfies $V_j=V_{j+q+1}$, providing a string of $q+1$ consecutively equal $V_i$'s beneath $V_{g-p}$. Notice that $V_{g-p}$ is the first term for which $\alpha(g-p+p) \neq g$, but instead returns $g-1$. This means that any $V_i$ with $i < g-p$ satisfies $\alpha(i) = i+p$. Due to Theorem \ref{thm:sumdinvariants}, there are $(k+1)$-many versions of Equation \ref{eq:sumdinvariants} that each involve a sum of $q$-many differences of $V_i$'s. Because there are $q+1$ equations between those whose sums begin with $(V_j-V_{j+p})$ and $(V_{j+q+1}-V_{j+q+1+p})$, respectively, we must have
\[
(V_{j}-V_{j+p})-(V_{j+q+1}-V_{j+q+1+p}) > 0.
\]
However $V_j=V_{j+q+1}$ would then imply $V_{j+q+1+p}>V_{j+p}$, which is the desired contradiction. Thus, the length of a string of consecutively equal $V_i$'s appearing beneath $V_{g-p}$ is at most $q$. Since we already know that $K$ admits a string of $q+1$ consecutively equal $V_i$'s, we need only investigate above $V_{g-p}$.

Due to our choice of $k$, we see that $V_k$ sits just beneath the central $q$-block, since $V_{k}-V_{\alpha(k)}$ is the only non-zero term of Equation \ref{eqn:lspacethick1} that remains. If $V_k=2$, then $K$ admits a string of $g-1-(k+1)+1 = q+m = \tfrac{p+q}{2}$ consecutively equal $V_i$'s between $V_{g-1}$ and $V_{k+1}$. Alternatively, $K$ can have a few more consecutively equal $V_i$'s. To see this, let $B_q$ denote the number of \textit{full} $q$-blocks between the central $q$-block and $V_{g-2}$. Notice that $V_{g-p-1} = 1+B_q$ since the number of full $q$-blocks above and below the central $q$-block is the same. Then if $V_k=1+B_q$ and all of $V_{g-p}$ and $V_{g-p-2}$ belong to the same $q$-block, then $K$ would admit at most $k-(g-p-2)+1 = \tfrac{p+q}{2}+2$ many consecutively equal $V_i$'s.
\end{proof}

\subsection{Alexander polynomials}
\label{subsec:Alexanderpoly}

\indent Back in the proof of Lemma \ref{lem:lspacethicklbound}, we connected knowledge of the $V_i$'s to the form of $\textit{CFK}^{\infty}(K)$. A quadruple $V_{i+2}, V_{i+1}, V_{i}, V_{i-1}$ satisfying $V_{i-1}=V_i=V_{i+1}+1=V_{i+2}+1$ corresponds to a pair of generators $x_{i+1}$ and $x_i$ connected by a length 1 horizontal differential from $U^{-1}{x_i}$ to $x_{i+1}$. Since knot Floer homology categorifies the Alexander polynomial, we see that $\Delta_K(t)$ then contains a $t^{i+1}-t^i$ term. It turns out in the $p-q=2$ case that all of the $V_i$'s above $V_k$ are determined. This is enough to determine all of the $V_i$'s beneath $V_k$, and in turn the Alexander polynomial of $K$. The theorem below is stated in terms of only the non-negative powers of the symmetrized Alexander polynomial, denoted $\Delta^+_K(t)$, for simplicity.

\begin{corollary}
Suppose $K$ is an L-space knot of genus $g$, also admitting a reducible surgery of the form $S^3_{2g-1}(K) \cong Y_p \# Y_q$ with $p>q>1$ relatively prime and $p-q=2$. 
If $p \equiv -1 \, ( \text{mod} \, 4)$, then
\[
\Delta^+_K(t) = 1 + (t^g-t^{g-1}) + \sum_{j=1}^{\tfrac{p-3}{4}} \left( t^{2j}-t^{2j-1} \right) + \sum_{i=1}^{\tfrac{q-1}{2}} \sum_{j=1}^{i} \left(t^{g-ip+2j-1}-t^{g-ip+2j-2}\right).
\]
If $p \equiv 1 \, ( \text{mod} \, 4)$, then
\[
\Delta^+_K(t) = (t^g-t^{g-1}) + \sum_{j=1}^{\tfrac{p-1}{4}} \left( t^{2j-1}-t^{2(j-1)} \right) + \sum_{i=1}^{\tfrac{q-1}{2}} \sum_{j=1}^{i} \left(t^{g-ip+2j-1}-t^{g-ip+2j-2}\right).
\]
\label{cor:m1Apoly}
\end{corollary}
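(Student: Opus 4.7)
The plan is to determine the complete sequence $(V_0(K), \ldots, V_g(K))$ and then convert this data to the Alexander polynomial. Recall that for an L-space knot the $V_i$'s coincide with the torsion coefficients of $K$, so the symmetrized Alexander polynomial satisfies $[t^i]\Delta_K(t) = V_{i-1}(K) - 2V_i(K) + V_{i+1}(K)$ for $i \geq 1$. Equivalently, the positions where $V_{i-1}-V_i = 1$ completely encode $\Delta_K^+(t)$, with each maximal run of such positions contributing an alternating block of monomials. Thus the task reduces to identifying which indices support a unit drop in the $V_i$ sequence.

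First I would apply Equation \ref{eq:sumdinvariants} at $\ell = k := g-q-2 = \tfrac{(p-1)(q-1)}{2}-1$ to reconstruct the central $q$-block from the proof of Lemma \ref{lem:lspacethicklbound}, and then use the L-space values $V_{g-1}=V_{g-2}=1$, $V_g = 0$ (which hold since $m=1$) to extend this to a $(q+1)$-block bounded by a unit drop at index $g$. Next I would iterate Equation \ref{eq:sumdinvariants} at $\ell = k - jq$ for $j = 1, \ldots, \tfrac{q-1}{2}$. Each instance relates a candidate $q$-block sitting $jq$ steps below the central one to terms at indices shifted by $p$, and the thickness upper bound $th(K)\leq q+1$ from Lemma \ref{lem:lspacethickubound} leaves essentially a unique admissible pattern: a downward induction forces each new equation to be satisfied by inserting exactly two additional unit drops, placed symmetrically about a fold point. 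This produces precisely the jumps at indices $g-ip+2j-1$ for $1 \leq i \leq \tfrac{q-1}{2}$ and $1 \leq j \leq i$ appearing in the stated formulas.

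Once the top half of the sequence is pinned down, I would use the symmetry $V_{-i}(K) = V_i(K)+i$, which reflects the palindromic property of $\Delta_K$, to determine the remaining $V_i$'s in the range $[0,\tfrac{p-3}{2}]$. The split into the two cases $p \equiv \pm 1 \pmod 4$ appears here, because the parity of $\tfrac{q-1}{2} = \tfrac{p-3}{2}$ controls whether the lowest surviving block has even or odd length, and consequently whether $\Delta_K^+(t)$ begins with a constant term $1$ (when $p \equiv -1 \pmod 4$) or with $-1 + t$ (when $p \equiv 1 \pmod 4$). The main obstacle is the combinatorial bookkeeping in the iterated application of Equation \ref{eq:sumdinvariants}, in particular tracking whether each $\alpha(\ell+i+p)$ lies above or below $g-p$ (since the telescoping behavior changes at this threshold) and verifying that the forced jump locations remain mutually consistent as one descends. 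Useful sanity checks are that the resulting polynomial is palindromic, satisfies $\Delta_K(1) = 1$, and has degree exactly $g$, each of which constrains the endpoint computations.
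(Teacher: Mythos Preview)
Your overall strategy---determine the full sequence $(V_i)$ from Equation~\ref{eq:sumdinvariants} and then read off $\Delta_K^+$ from the jump locations---is exactly the paper's. The gap is in the induction scheme. You propose to sample Equation~\ref{eq:sumdinvariants} only at $\ell=k-jq$ and to let the thickness bound from Lemma~\ref{lem:lspacethickubound} pin down the pattern in each new batch of $q$ unknowns. This does not work beyond the first step.

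Concretely, take $q=5$, $p=7$, $g=18$, $k=11$. The $\ell=k$ and $\ell=k-q=6$ instances, together with monotonicity, do uniquely force $V_{17}=\cdots=V_{12}=1$, $V_{11}=\cdots=V_7=2$, $V_6=3$. But at $\ell=k-2q=1$ the equation reduces to $\sum_{i=1}^{5}V_i=20$ with $V_5\geq V_6=3$ and unit-step monotonicity, and this has three solutions:
\[
(V_5,\ldots,V_1)\in\bigl\{(4,4,4,4,4),\ (3,4,4,4,5),\ (3,3,4,5,5)\bigr\}.
\]
All three are compatible with $th(K)\leq q+1=6$ (which only bounds block length by $q+3=8$), so the thickness bound does not select among them. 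The correct answer is the middle one, but you cannot see this without using the intermediate values $\ell=5,4,3,2$. The paper's induction steps through \emph{every} $\ell=k,k-1,k-2,\ldots,0$; since $p=q+2$, each new equation introduces exactly one new unknown $V_{k-j}$ (all other terms having larger index), so the sequence is forced term by term. Your ``essentially a unique admissible pattern'' claim is where the argument breaks.

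A secondary issue: the relation $V_{-i}=V_i+i$ does not determine the $V_i$ in $[0,\tfrac{p-3}{2}]$ from those above. It is a constraint on the extension to negative indices, not a reflection symmetry of the non-negative $V_i$'s about a midpoint. The paper uses it only at the very end, to decide whether a drop occurs at index $0$ once $V_1$ and $V_0$ are already known from the full step-size-one induction; this is what produces the two cases $p\equiv\pm1\pmod 4$.
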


\begin{proof}
Our goal is to show that all of the $V_i$'s are determined, and to locate everywhere that the $V_i$'s increase to assemble $\Delta^+_K(t)$. Since $p-q=2$, the central $q$-block satisfies $V_{k+1}=V_{k+q}=V_{g-2}=1$. The rest of the $V_i$'s are determined by sequencing through the $\tfrac{(p-1)(q-1)}{2}$-many versions of Equation \ref{eq:sumdinvariants}. Essentially Equation \ref{eqn:lspacethick1}, repeated here as
\[
\sum_{i=0}^{q-1} \left(V_{k+i}-V_{\alpha(k+i+p)} \right) = 1,
\]
yields $V_k=1+V_{g-1}=2$. Then 
\begin{equation}
\sum_{i=0}^{q-1} \left(V_{(k-1)+i}-V_{\alpha((k-1)+i+p)} \right) = 2
\label{eq:lspacethick2}
\end{equation}
yields $(V_{k-1}-V_{g-1})+(V_{k}-V_{g-1})=2$, which forces $V_{k-1}=2$. In general, first term of 
\begin{equation}
\sum_{i=0}^{q-1} \left(V_{(k-j)+i}-V_{\alpha{(k-j)+i+p}} \right) = j
\label{eq:lspacethickj}
\end{equation}
determines $V_{k-j}$ provided that all $V_i$'s with $i > k-j$ are known.

Next, we locate where the $V_i$'s increase to determine the components of $\Delta^+_K(t)$. The $V_i$'s ranging from $V_{g-1}$ (duplicated once to account for $\alpha(g)=g-1$ ) to $V_0$ can be placed in an array with $p$ rows and $\tfrac{q+1}{2}$ columns. They populate the array first along columns and then by rows, without filling the final column. The first column consists of two copies of $V_{g-1}$ and the central $q$-block, all satisfying $V_{g-1}=V_{k+1}=V_{g-p+1}=1$. The second column contains the $q$-block between $V_{g-p}$ and $V_{g-p-q+1} = V_{g-2p+3}$. The next $q$-many $V_i$'s all have equal differences $(V_i-V_{i+p})=2$ guaranteed by
\[
\sum_{i=0}^{q-1} \left( V_{k-2q+i}-V_{k+p-2q+i} \right) = 2q,
\]
but the corresponding string of $V_{i+p}$'s are not all equal. Since $p-q=2$, the first two $V_i$'s of the next set form a $2$-block at the bottom of the second column, with the remaining $V_i$'s forming a $q-2$-block at the top of the third column. In general, the $i$th column contains a $q-2(i-2)$-block followed by $(i-1)$-many $2$-blocks. Arranged in this way, we can easily determine when the $V_i$'s change. 

Aside from the initial increase from $V_g$ to $V_{g-1}$ contributing $(t^g-t^{g-1})$, the increase from $V_{g-ip+1}$ at bottom of the $i$th column to $V_{g-ip}$ at the top of the $(i+1)$st column contributes $t^{g-ip+1}-t^{g-ip}$. From the second column onward, the $(i-1)$-many $2$-blocks above $V_{g-ip+1}$ also make a contribution. Overall, the contribution from the $i$th column (without $V_{g-(i-1)p}$ but including $V_{g-ip}$) is
\[
\sum_{j=1}^{i} \left(t^{g-ip+2j-1}-t^{g-ip+2j-2} \right).
\]
This procedure works for all but the final $\tfrac{q+1}{2}$ column, where the parity of $\tfrac{p+1}{2}$ matters. Suppose $\tfrac{p+1}{2}$ is even, and notice that $V_0$ belongs to the bottom of the final $2$-block. This is because the final column has $g-(\tfrac{q+1}{2}-1)p=\tfrac{p+1}{2}$-many $V_i$'s, and begins with a $q-2(\tfrac{q+1}{2}-2)=3$-block. Since the $V_i$'s satisfy $V_{-i}-V_i=i$ \cite{HLZ15}*{Lemma 2.5}, we see that $V_1=V_0$ forces an increase after $V_0$. There are $\tfrac{p-3}{4}$-many $2$-blocks, so the contribution from this column is then
\[
1+\sum_{j=1}^{\tfrac{p-3}{4}} \left( t^{2j}-t^{2j-1} \right).
\]

If instead $\tfrac{p+1}{2}$ is odd, then $V_0$ appears beneath the final $2$-block and there are only $\tfrac{p-5}{4}$-many $2$-blocks. The contribution is then
\[
\sum_{j=1}^{\tfrac{p-1}{4}} \left( t^{2j-1}-t^{2(j-1)} \right).
\]



\end{proof}

\bibliographystyle{alpha}
\bibliography{reducibleslicethicknessfinal}

@article {HLZ15,
    AUTHOR = {Hom, Jennifer and Lidman, Tye and Zufelt, Nicholas},
     TITLE = {Reducible surgeries and {H}eegaard {F}loer homology},
   JOURNAL = {Math. Res. Lett.},
  FJOURNAL = {Mathematical Research Letters},
    VOLUME = {22},
      YEAR = {2015},
    NUMBER = {3},
     PAGES = {763--788},
      ISSN = {1073-2780},
   MRCLASS = {57M27 (57R58)},
  MRNUMBER = {3350104},
MRREVIEWER = {Paolo Ghiggini},
       DOI = {10.4310/MRL.2015.v22.n3.a8},
       URL = {https://doi.org/10.4310/MRL.2015.v22.n3.a8},
}

@article {Gor96,
    AUTHOR = {Gordon, C. McA. and Luecke, J.},
     TITLE = {Reducible manifolds and {D}ehn surgery},
   JOURNAL = {Topology},
  FJOURNAL = {Topology. An International Journal of Mathematics},
    VOLUME = {35},
      YEAR = {1996},
    NUMBER = {2},
     PAGES = {385--409},
      ISSN = {0040-9383},
   MRCLASS = {57N10},
  MRNUMBER = {1380506},
MRREVIEWER = {Darren D. Long},
       DOI = {10.1016/0040-9383(95)00016-X},
       URL = {https://doi.org/10.1016/0040-9383(95)00016-X},
}

@article {Gor87,
    AUTHOR = {Gordon, C. McA. and Luecke, J.},
     TITLE = {Only integral {D}ehn surgeries can yield reducible manifolds},
   JOURNAL = {Math. Proc. Cambridge Philos. Soc.},
  FJOURNAL = {Mathematical Proceedings of the Cambridge Philosophical
              Society},
    VOLUME = {102},
      YEAR = {1987},
    NUMBER = {1},
     PAGES = {97--101},
      ISSN = {0305-0041},
   MRCLASS = {57M25},
  MRNUMBER = {886439},
       DOI = {10.1017/S0305004100067086},
       URL = {https://doi.org/10.1017/S0305004100067086},
}

@article {mappingcone,
    AUTHOR = {Gainullin, Fyodor},
     TITLE = {The mapping cone formula in {H}eegaard {F}loer homology and
              {D}ehn surgery on knots in {$S^3$}},
   JOURNAL = {Algebr. Geom. Topol.},
  FJOURNAL = {Algebraic \& Geometric Topology},
    VOLUME = {17},
      YEAR = {2017},
    NUMBER = {4},
     PAGES = {1917--1951},
      ISSN = {1472-2747},
   MRCLASS = {57M27 (57M25)},
  MRNUMBER = {3685598},
MRREVIEWER = {Yi Ni},
       DOI = {10.2140/agt.2017.17.1917},
       URL = {https://doi.org/10.2140/agt.2017.17.1917},
}

@article {doflens,
    AUTHOR = {Ozsv\'{a}th, Peter and Szab\'{o}, Zolt\'{a}n},
     TITLE = {Absolutely graded {F}loer homologies and intersection forms
              for four-manifolds with boundary},
   JOURNAL = {Adv. Math.},
  FJOURNAL = {Advances in Mathematics},
    VOLUME = {173},
      YEAR = {2003},
    NUMBER = {2},
     PAGES = {179--261},
      ISSN = {0001-8708},
   MRCLASS = {57R58 (57M27)},
  MRNUMBER = {1957829},
MRREVIEWER = {Jacob Andrew Rasmussen},
       DOI = {10.1016/S0001-8708(02)00030-0},
       URL = {https://doi.org/10.1016/S0001-8708(02)00030-0},
}

@incollection {Boyer,
    AUTHOR = {Boyer, Steven},
     TITLE = {Dehn surgery on knots},
 BOOKTITLE = {Handbook of geometric topology},
     PAGES = {165--218},
 PUBLISHER = {North-Holland, Amsterdam},
      YEAR = {2002},
   MRCLASS = {57M50 (57-02 57M25 57N10)},
  MRNUMBER = {1886670},
MRREVIEWER = {Thomas W. Mattman},
}

@misc{DeY21b,
  doi = {10.48550/ARXIV.2112.08074},
  
  url = {https://arxiv.org/abs/2112.08074},
  
  author = {DeY{eso III}, Robert},
  
  keywords = {Geometric Topology (math.GT), FOS: Mathematics, FOS: Mathematics},
  
  title = {Thin knots and the Cabling Conjecture},
  
  publisher = {arXiv},
  
  year = {2021},
  
  copyright = {Creative Commons Attribution 4.0 International}
}

@article {threesummands,
    AUTHOR = {Meier, Jeffrey},
     TITLE = {A note on cabled slice knots and reducible surgeries},
   JOURNAL = {Michigan Math. J.},
  FJOURNAL = {Michigan Mathematical Journal},
    VOLUME = {66},
      YEAR = {2017},
    NUMBER = {2},
     PAGES = {269--276},
      ISSN = {0026-2285},
   MRCLASS = {57M25 (57M27 57R65)},
  MRNUMBER = {3657218},
MRREVIEWER = {Ryan D. Budney},
       DOI = {10.1307/mmj/1490639817},
       URL = {https://doi.org/10.1307/mmj/1490639817},
}

@article {Acuna,
    AUTHOR = {Gonz\'{a}lez-Acu\~{n}a, Francisco and Short, Hamish},
     TITLE = {Knot surgery and primeness},
   JOURNAL = {Math. Proc. Cambridge Philos. Soc.},
  FJOURNAL = {Mathematical Proceedings of the Cambridge Philosophical
              Society},
    VOLUME = {99},
      YEAR = {1986},
    NUMBER = {1},
     PAGES = {89--102},
      ISSN = {0305-0041},
   MRCLASS = {57M25 (57N15)},
  MRNUMBER = {809502},
MRREVIEWER = {Martin Scharlemann},
       DOI = {10.1017/S0305004100063969},
       URL = {https://doi.org/10.1017/S0305004100063969},
}

@article {Moser,
    AUTHOR = {Moser, Louise},
     TITLE = {Elementary surgery along a torus knot},
   JOURNAL = {Pacific J. Math.},
  FJOURNAL = {Pacific Journal of Mathematics},
    VOLUME = {38},
      YEAR = {1971},
     PAGES = {737--745},
      ISSN = {0030-8730},
   MRCLASS = {55F55 (57C45)},
  MRNUMBER = {383406},
MRREVIEWER = {Wilbur Whitten},
       URL = {http://projecteuclid.org/euclid.pjm/1102969920},
}

@incollection {Sanchez,
    AUTHOR = {Valdez S\'{a}nchez, Luis Gerardo},
     TITLE = {Dehn fillings of {$3$}-manifolds and non-persistent tori},
      NOTE = {II Iberoamerican Conference on Topology and its Applications
              (Morelia, 1997)},
   JOURNAL = {Topology Appl.},
  FJOURNAL = {Topology and its Applications},
    VOLUME = {98},
      YEAR = {1999},
    NUMBER = {1-3},
     PAGES = {355--370},
      ISSN = {0166-8641},
   MRCLASS = {57N10},
  MRNUMBER = {1720012},
MRREVIEWER = {Ying Qing Wu},
       DOI = {10.1016/S0166-8641(99)00038-3},
       URL = {https://doi-org.libproxy.uoregon.edu/10.1016/S0166-8641(99)00038-3},
}

@article {heddengeography,
    AUTHOR = {Hedden, Matthew and Watson, Liam},
     TITLE = {On the geography and botany of knot {F}loer homology},
   JOURNAL = {Selecta Math. (N.S.)},
  FJOURNAL = {Selecta Mathematica. New Series},
    VOLUME = {24},
      YEAR = {2018},
    NUMBER = {2},
     PAGES = {997--1037},
      ISSN = {1022-1824},
   MRCLASS = {57M27 (57R58)},
  MRNUMBER = {3782416},
MRREVIEWER = {David Shea Vela-Vick},
       DOI = {10.1007/s00029-017-0351-5},
       URL = {https://doi-org.libproxy.uoregon.edu/10.1007/s00029-017-0351-5},
}

@article {Gabai87,
    AUTHOR = {Gabai, David},
     TITLE = {Foliations and the topology of {$3$}-manifolds. {III}},
   JOURNAL = {J. Differential Geom.},
  FJOURNAL = {Journal of Differential Geometry},
    VOLUME = {26},
      YEAR = {1987},
    NUMBER = {3},
     PAGES = {479--536},
      ISSN = {0022-040X},
   MRCLASS = {57N10 (57R30)},
  MRNUMBER = {910018},
MRREVIEWER = {Jean-Pierre Otal},
       URL = {http://projecteuclid.org.libproxy.uoregon.edu/euclid.jdg/1214441488},
}

@article {LipshitzLee,
    AUTHOR = {Lee, Dan A. and Lipshitz, Robert},
     TITLE = {Covering spaces and {$\Bbb Q$}-gradings on {H}eegaard {F}loer
              homology},
   JOURNAL = {J. Symplectic Geom.},
  FJOURNAL = {The Journal of Symplectic Geometry},
    VOLUME = {6},
      YEAR = {2008},
    NUMBER = {1},
     PAGES = {33--59},
      ISSN = {1527-5256},
   MRCLASS = {57R58 (57M27)},
  MRNUMBER = {2417439},
MRREVIEWER = {Alexandru L. Oancea},
       URL = {http://projecteuclid.org.libproxy.uoregon.edu/euclid.jsg/1215032732},
}

@article {Greene,
    AUTHOR = {Greene, Joshua Evan},
     TITLE = {L-space surgeries, genus bounds, and the cabling conjecture},
   JOURNAL = {J. Differential Geom.},
  FJOURNAL = {Journal of Differential Geometry},
    VOLUME = {100},
      YEAR = {2015},
    NUMBER = {3},
     PAGES = {491--506},
      ISSN = {0022-040X},
   MRCLASS = {57M25 (57M27)},
  MRNUMBER = {3352796},
MRREVIEWER = {Daniel Ruberman},
       URL = {http://projecteuclid.org.libproxy.uoregon.edu/euclid.jdg/1432842362},
}

@article {Howie,
    AUTHOR = {Howie, James},
     TITLE = {A proof of the {S}cott-{W}iegold conjecture on free products
              of cyclic groups},
   JOURNAL = {J. Pure Appl. Algebra},
  FJOURNAL = {Journal of Pure and Applied Algebra},
    VOLUME = {173},
      YEAR = {2002},
    NUMBER = {2},
     PAGES = {167--176},
      ISSN = {0022-4049},
   MRCLASS = {20E06 (20F05 57M07)},
  MRNUMBER = {1915093},
MRREVIEWER = {Martin Edjvet},
       DOI = {10.1016/S0022-4049(02)00042-7},
       URL = {https://doi-org.libproxy.uoregon.edu/10.1016/S0022-4049(02)00042-7},
}

@article {Cosmetic,
    AUTHOR = {Ni, Yi and Wu, Zhongtao},
     TITLE = {Cosmetic surgeries on knots in {$S^3$}},
   JOURNAL = {J. Reine Angew. Math.},
  FJOURNAL = {Journal f\"{u}r die Reine und Angewandte Mathematik. [Crelle's
              Journal]},
    VOLUME = {706},
      YEAR = {2015},
     PAGES = {1--17},
      ISSN = {0075-4102},
   MRCLASS = {57M25 (57M27)},
  MRNUMBER = {3393360},
MRREVIEWER = {Bruno P. Zimmermann},
       DOI = {10.1515/crelle-2013-0067},
       URL = {https://doi.org/10.1515/crelle-2013-0067},
}

@article {Sayari,
    AUTHOR = {Matignon, Daniel and Sayari, Nabil},
     TITLE = {Longitudinal slope and {D}ehn fillings},
   JOURNAL = {Hiroshima Math. J.},
  FJOURNAL = {Hiroshima Mathematical Journal},
    VOLUME = {33},
      YEAR = {2003},
    NUMBER = {1},
     PAGES = {127--136},
      ISSN = {0018-2079},
   MRCLASS = {57M50 (57M25 57N10)},
  MRNUMBER = {1966655},
MRREVIEWER = {Andrei Yu. Vesnin},
       URL = {http://projecteuclid.org/euclid.hmj/1150997871},
}

@article {Menasco,
    AUTHOR = {Menasco, William W. and Thistlethwaite, Morwen B.},
     TITLE = {Surfaces with boundary in alternating knot exteriors},
   JOURNAL = {J. Reine Angew. Math.},
  FJOURNAL = {Journal f\"{u}r die Reine und Angewandte Mathematik. [Crelle's
              Journal]},
    VOLUME = {426},
      YEAR = {1992},
     PAGES = {47--65},
      ISSN = {0075-4102},
   MRCLASS = {57M25},
  MRNUMBER = {1155746},
MRREVIEWER = {Martin Scharlemann},
}

@article {Schar,
    AUTHOR = {Scharlemann, Martin},
     TITLE = {Producing reducible {$3$}-manifolds by surgery on a knot},
   JOURNAL = {Topology},
  FJOURNAL = {Topology. An International Journal of Mathematics},
    VOLUME = {29},
      YEAR = {1990},
    NUMBER = {4},
     PAGES = {481--500},
      ISSN = {0040-9383},
   MRCLASS = {57M25 (57N10)},
  MRNUMBER = {1071370},
MRREVIEWER = {Cameron McA. Gordon},
       DOI = {10.1016/0040-9383(90)90017-E},
       URL = {https://doi.org/10.1016/0040-9383(90)90017-E},
}

@article {fourgenus,
    AUTHOR = {Hom, Jennifer and Wu, Zhongtao},
     TITLE = {Four-ball genus bounds and a refinement of the
              {O}zv\'{a}th-{S}zab\'{o} tau invariant},
   JOURNAL = {J. Symplectic Geom.},
  FJOURNAL = {The Journal of Symplectic Geometry},
    VOLUME = {14},
      YEAR = {2016},
    NUMBER = {1},
     PAGES = {305--323},
      ISSN = {1527-5256},
   MRCLASS = {57M27 (57M25 57R58)},
  MRNUMBER = {3523259},
MRREVIEWER = {Nikolai N. Saveliev},
       DOI = {10.4310/JSG.2016.v14.n1.a12},
       URL = {https://doi.org/10.4310/JSG.2016.v14.n1.a12},
}

@book {Ras,
    AUTHOR = {Rasmussen, Jacob Andrew},
     TITLE = {Floer homology and knot complements},
      NOTE = {Thesis (Ph.D.)--Harvard University},
 PUBLISHER = {ProQuest LLC, Ann Arbor, MI},
      YEAR = {2003},
     PAGES = {126},
      ISBN = {978-0496-39374-9},
   MRCLASS = {Thesis},
  MRNUMBER = {2704683},
       URL =
              {http://gateway.proquest.com/openurl?url_ver=Z39.88-2004&rft_val_fmt=info:ofi/fmt:kev:mtx:dissertation&res_dat=xri:pqdiss&rft_dat=xri:pqdiss:3091665},
}

@article {Holomorphic,
    AUTHOR = {Ozsv\'{a}th, Peter and Szab\'{o}, Zolt\'{a}n},
     TITLE = {Holomorphic disks and topological invariants for closed
              three-manifolds},
   JOURNAL = {Ann. of Math. (2)},
  FJOURNAL = {Annals of Mathematics. Second Series},
    VOLUME = {159},
      YEAR = {2004},
    NUMBER = {3},
     PAGES = {1027--1158},
      ISSN = {0003-486X},
   MRCLASS = {57M27 (32Q65 57R58)},
  MRNUMBER = {2113019},
MRREVIEWER = {Thomas E. Mark},
       DOI = {10.4007/annals.2004.159.1027},
       URL = {https://doi.org/10.4007/annals.2004.159.1027},
}

@article {nu,
    AUTHOR = {Ozsv\'{a}th, Peter S. and Szab\'{o}, Zolt\'{a}n},
     TITLE = {Knot {F}loer homology and rational surgeries},
   JOURNAL = {Algebr. Geom. Topol.},
  FJOURNAL = {Algebraic \& Geometric Topology},
    VOLUME = {11},
      YEAR = {2011},
    NUMBER = {1},
     PAGES = {1--68},
      ISSN = {1472-2747},
   MRCLASS = {57R58 (57M25 57M27)},
  MRNUMBER = {2764036},
MRREVIEWER = {Hans U. Boden},
       DOI = {10.2140/agt.2011.11.1},
       URL = {https://doi.org/10.2140/agt.2011.11.1},
}

@article {OS08,
    AUTHOR = {Ozsv\'{a}th, Peter S. and Szab\'{o}, Zolt\'{a}n},
     TITLE = {Knot {F}loer homology and integer surgeries},
   JOURNAL = {Algebr. Geom. Topol.},
  FJOURNAL = {Algebraic \& Geometric Topology},
    VOLUME = {8},
      YEAR = {2008},
    NUMBER = {1},
     PAGES = {101--153},
      ISSN = {1472-2747},
   MRCLASS = {57R58 (57M27)},
  MRNUMBER = {2377279},
MRREVIEWER = {John B. Etnyre},
       DOI = {10.2140/agt.2008.8.101},
       URL = {https://doi-org.prox.lib.ncsu.edu/10.2140/agt.2008.8.101},
}

@article {Ghi08,
    AUTHOR = {Ghiggini, Paolo},
     TITLE = {Knot {F}loer homology detects genus-one fibred knots},
   JOURNAL = {Amer. J. Math.},
  FJOURNAL = {American Journal of Mathematics},
    VOLUME = {130},
      YEAR = {2008},
    NUMBER = {5},
     PAGES = {1151--1169},
      ISSN = {0002-9327},
   MRCLASS = {57M25 (57R58)},
  MRNUMBER = {2450204},
       DOI = {10.1353/ajm.0.0016},
       URL = {https://doi-org.prox.lib.ncsu.edu/10.1353/ajm.0.0016},
}

@book {Ni07,
    AUTHOR = {Ni, Yi},
     TITLE = {Knot {F}loer homology detects fibred knots},
      NOTE = {Thesis (Ph.D.)--Princeton University},
 PUBLISHER = {ProQuest LLC, Ann Arbor, MI},
      YEAR = {2007},
     PAGES = {51},
      ISBN = {978-1109-94694-9},
   MRCLASS = {Thesis},
  MRNUMBER = {2710002},
       URL = {http://gateway.proquest.com.prox.lib.ncsu.edu/openurl?url_ver=Z39.88-2004&rft_val_fmt=info:ofi/fmt:kev:mtx:dissertation&res_dat=xri:pqdiss&rft_dat=xri:pqdiss:3256606},
}

\end{document}